\newtheorem{thm}{Theorem}[section]
\newtheorem{lem}[thm]{Lemma}
\title{Dynamics and Control of Additional Food Provided Prey-Predator Systems exhibiting Holling Type-III Functional Response and Intra-specific Competition among Predators}
\author[1]{D Bhanu Prakash}
\author[2]{D K K Vamsi}
\affil[1, 2]{ \ Department of Mathematics and Computer Science, Sri Sathya Sai Institute of Higher Learning, India.}
\affil[2]{Center for Excellence in Mathematical Biology, Sri Sathya Sai Institute of Higher Learning, India.}
\affil[1]{Corresponding Author. Email: dbhanuprakash@sssihl.edu.in}
\date{}
\begin{document}

\maketitle

\begin{abstract} {{
\noindent The dynamics of predator-prey systems influenced by intra-specific competition and additional food resources have increasingly become a subject of rigorous study in the realm of mathematical biology. In this study, we consider an additional food provided prey-predator model exhibiting Holling type-III functional response and the intra-specific competition among predators. We prove the existence and uniqueness of global positive solutions for the proposed model. We study the existence and stability of equilibrium points and further explore the possible bifurcations. We numerically depict the presence of Hysteresis loop in the system. We further study the global dynamics of the system and discuss the consequences of providing additional food. Later, we do the time-optimal control studies with respect to the quality and quantity of additional food as control variables by transforming the independent variable in the control system. We show that the findings of these dynamics and control studies emphasises the role of additional food and intra-specific competition in bio-control of pests.
}}
\end{abstract}

{ \bf {keywords:} } Prey-Predator System; Holling type-III response; Additional Food; Intra-specific Competition; Bifurcation; Hysteresis loop; Time-optimal control; Pest management;

{ \bf {MSC 2020 codes:} } 37A50; 60H10; 60J65; 60J70;


\section{Introduction} \label{Intro}

\indent Papaya is an important tropical fruit crop in India, but its production is often threatened by various pests. One such major pest is the papaya mealybug, which has caused significant crop loss. To combat this, biocontrol agents like \textit{Acerophagus papayae}, a natural enemy of the pest was imported from Puerto Rico. This natural enemy for papaya was first found out in Mexico. This natural enemy have been introduced into the vast papaya farms in Coimbatore in 2008, with notable success \cite{BioControl_India}. This method of using one organism to control another is gaining popularity due to its environmentally friendly nature and reduced need for chemical pesticides. Biocontrol offers sustainable pest management by minimizing harm to non-target species, preserving ecological balance and reducing chemical residues in food and soil. However, despite its benefits, biocontrol can lead to unintended outcomes such as the overpopulation or extinction of certain species, if not carefully planned and analyzed.

Mathematical models are essential tools in understanding and predicting the outcomes of pest management strategies, including biocontrol. In such models, the pest can be treated as the “prey” and the biocontrol agent as the “predator,” allowing researchers to apply mathematical theories of prey-predator system. Analyzing these systems helps identify conditions for long-term coexistence, extinction, or sudden changes in population levels.

The major component of the prey-predator models is the functional response. Among various possible responses, we study the type-III responses, with their sigmoidal curve. These are often seen in systems where predators have a learning phase or where prey exhibit low density refuge effects. Recently, authors in \cite{V3EarthSystems,V3JTB,V4Acta,V4DEDS} studied the systems exhibiting Holling type-III and Holling type-IV functional responses. 

However, most of these models reach infinite prey in the absence of predator. Also, these models does not incorporate the intra-specific competition among predators. As the ecosystems are of limited resources, competition plays an important role in the dynamics of the system. Bazykin models were the first few models where the effect of intra-specific competition is included \cite{bazykin1976structural,Adhikary2021}. 

 In this study, we define the additional food provided prey-predator system exhibiting Holling type-III functional response and intra-specific competition among predators. Further, we present the stability, bistability, bifurcation studies on the proposed system. Moreover, time optimal control studies help determine the most efficient strategies for introducing biocontrol agents to reduce pest populations quickly and sustainably. These mathematical insights are vital to ensuring biocontrol efforts remain effective and ecologically safe.

The article is structured as follows: Section \ref{sec:3iscdmodel} introduces the prey-predator model with intra-specific competition and provision of additional food among predators. Section \ref{sec:3iscdposi} proves the positivity and boundedness of the solutions of the proposed system. Section \ref{sec:3iscdequil} investigates the conditions for the existence of various equilibria. The local stability of these equilibria is presented in section \ref{sec:3iscdstab}. In section \ref{sec:3iscdbifur}, we present the various possible local bifurcations exhibited by the proposed model both analytically and numerically. Section \ref{sec:3iscdglobaldynamics} studies the global dynamics of the proposed system in the parameter space of quality and quantity of additional food. Section \ref{sec:3iscdconseq} provides a detailed analysis of the consequences of providing additional food. Section \ref{sec:3iscdtimecontrol} presents the study on time-optimal control problems with quality or quantity of additional food as control parameters. Finally, we present the discussions and conclusions in section \ref{sec:disc}.

\section{Model Formulation} \label{sec:3iscdmodel}

\indent The Prey-Predator model in the presence of Holling type-III predator functional response, also known as the p = 2 S-type functional response and intra-specific competition among predators is given by
\begin{equation} \label{3iscdnoa}
	\begin{split}
		\frac{\mathrm{d} N}{\mathrm{d} T} & = r N \left(1-\frac{N}{K} \right)-\frac{c N^2 P}{a^2+N^2}, \\
		\frac{\mathrm{d} P}{\mathrm{d} T} & = \delta_1 \left( \frac{N^2}{a^2+N^2} \right) P - m_1 P - d P^2.
	\end{split}
\end{equation}

After provision of additional food, the model gets transformed to the following. In this case, we use functional responses from \cite{V3JTB}.
\begin{equation} \label{3iscda}
	\begin{split}
		\frac{\mathrm{d} N}{\mathrm{d} T} & = r N \left(1-\frac{N}{K} \right)-\frac{c N^2 P}{a^2+N^2+\alpha \eta  A^2}, \\
		\frac{\mathrm{d} P}{\mathrm{d} T} & = \delta_1 \left( \frac{N^2 + \eta A^2}{a^2+N^2+\alpha \eta A^2} \right) P - m_1 P - d P^2.
	\end{split}
\end{equation}

Here, the parameter $\delta_1$ denotes the conversion efficiency that represents the rate at which  prey biomass gets converted into predator biomass and can be obtained as a ratio of nutritive value of prey upon the handling time of predator.  The term $\alpha$ denotes the ratio between the maximum growth rates of the predator when it consumes the prey and additional food respectively. This term can be seen to be an equivalent of {\textit {\bf{quality}} }of additional food. The term $\eta$ represents the ratio between the search rate of the predator for additional food and prey respectively. The term $-dP^2(t)$ accounts for the intra-specific competition among the predators in order to avoid their unbounded growth in the absence of target prey. The biological meaning for all the parameters involved in the systems (\ref{3iscdnoa}) and (\ref{3iscda}) are enlisted and described in \autoref{3iscdparam_tab}. Further details regarding the derivation of the functional response, model formulation and the parameters can be found in \cite{V3JTB}.

To lessen the complexity in the analysis, we now reduce the number of parameters in the model (\ref{3iscda}) by introducing the transformations $N = a x,\  P=\frac{ar y}{c},\ t = r T$. The system (\ref{3iscda}) gets transformed to:
\begin{equation} \label{3iscd}
	\begin{split}
		\frac{\mathrm{d} x}{\mathrm{d} t} & = x \left( 1 - \frac{x}{\gamma} \right) - \frac{x^2 y}{1 + x^2 + \alpha \xi }, \\
		\frac{\mathrm{d} y}{\mathrm{d} t} & = \frac{\delta \left(x^2 + \xi \right) y}{1+x^2+\alpha \xi} - m y - \epsilon y^2,
	\end{split}
\end{equation}
where $ \gamma = \frac{K}{a},\  \xi = \eta (\frac{A}{a})^2,\  \epsilon = \frac{d a}{c},\ m = \frac{m_1}{r},\ \delta = \frac{\delta_1}{r}$. Here the term $\xi$ denotes
the quantity of additional food perceptible to the predator with respect to the prey relative to the nutritional value of prey to the additional food. Hence this can be seen to be an equivalent of {\textit {\bf{quantity}} }of additional food.

\begin{table}[bht!]
	\centering
	\begin{tabular}{ccc}
		\hline
		Parameter & Definition & Dimension \\  
		\hline
		T & Time & time\\ 
		N & Prey density & biomass \\
		P & Predator density & biomass \\
		A & Additional food & biomass \\
		r & Prey intrinsic growth rate & time$^{-1}$ \\
		K & Prey carrying capacity & biomass \\
		c & Rate of predation & time$^{-1}$ \\
		a & Half Saturation value of the predators & biomass \\
		$\delta$ & Conversion efficiency & time$^{-1}$ \\
		m & death rate of predators in absence of prey & time$^{-1}$ \\
		d & Predator Intra-specific competition & biomass$^{-1}$ time$^{-1}$ \\
		$\alpha$ & quality of additional food & Dimensionless \\
		$\xi$ & quantity of additional food & biomass$^{2}$ \\
		\hline
	\end{tabular}
	\caption{Description of variables and parameters present in the systems (\ref{3iscdnoa}) and (\ref{3iscda}).}
	\label{3iscdparam_tab}
\end{table}

\section{Positivity and boundedness of the solution} \label{sec:3iscdposi}

\subsection{Positivity of the solution}

In this section, we demonstrate that the positive $xy$-quadrant is an invariant region for the system (\ref{3iscd}). Specifically, this means that if the initial populations of both prey and predator start in the positive $xy$-quadrant (i.e., $x(0) > 0$ and $y(0) > 0$), they will remain within this quadrant for all future times.

If prey population goes to zero (i.e., $x(t)=0$), then it is observed from the model equations (\ref{3iscd}) that $\frac{\mathrm{d} x}{\mathrm{d} t} = 0.$ This means that the prey population is constant (remains at zero) and cannot be negative. This holds even for the case when predator population goes to zero (i.e., $y=0$). Notably, $x=0$ and $y=0$ serve as invariant manifolds, with $\frac{\mathrm{d} x}{\mathrm{d} t} \Big|_{x=0} = 0$ and $\frac{\mathrm{d} y}{\mathrm{d} t} \Big|_{y=0} = 0$. Therefore, if a solution initiates within the confines of the positive $xy$-quadrant, it will either remains positive or stays at zero eternally (i.e., $x(t) \geq 0$ and $y(t) \geq 0 \ \forall t>0$ if $x(0)>0$ and $y(0)>0$). 

\subsection{Boundedness of the solution}

\begin{thm}
	Every solution of the system (\ref{3iscd}) that starts within the positive quadrant of the state space remains bounded. \label{3iscdbound}
\end{thm}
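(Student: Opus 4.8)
The plan is to decouple the two state variables and control each one by a scalar differential inequality together with the comparison principle, using the positivity established in the previous subsection. First I would bound the prey. Since $x(t)\ge 0$ and $y(t)\ge 0$ for all $t>0$, the predation term in the first equation of (\ref{3iscd}) is nonnegative, so
\[
\frac{dx}{dt} \le x\left(1-\frac{x}{\gamma}\right).
\]
Comparing with the logistic equation $\dot u = u(1-u/\gamma)$, $u(0)=x(0)$, whose solutions satisfy $u(t)\le\max\{x(0),\gamma\}$ and $u(t)\to\gamma$, the comparison principle gives $x(t)\le\max\{x(0),\gamma\}$ for all $t\ge 0$ and $\limsup_{t\to\infty}x(t)\le\gamma$. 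Thus the prey component is bounded independently of the predator.

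Next I would bound the predator. The key observation is that the ratio $g(x):=\dfrac{x^2+\xi}{1+x^2+\alpha\xi}$ appearing in the second equation is uniformly bounded on $[0,\infty)$: a short sign analysis of $g'(x)$ shows $g$ is monotone (increasing if $\xi(1-\alpha)<1$, decreasing otherwise), so $g(x)\le g^\ast := \max\{1,\ \xi/(1+\alpha\xi)\}$ for every $x\ge 0$. Hence, using $y\ge 0$,
\[
\frac{dy}{dt} \le (\delta g^\ast - m)\,y - \epsilon y^2 .
\]
The right-hand side is a logistic/Bernoulli vector field in $y$: if $\delta g^\ast > m$, its solutions are bounded by $\max\{y(0),(\delta g^\ast-m)/\epsilon\}$ with $\limsup_{t\to\infty}y(t)\le(\delta g^\ast-m)/\epsilon$; if $\delta g^\ast\le m$, then $\dot y\le -\epsilon y^2\le 0$ so $y$ is nonincreasing and bounded by $y(0)$. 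Either way the comparison principle yields a global-in-time bound on $y(t)$.

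Combining the two estimates shows every solution starting in the open positive quadrant remains in a bounded set, and in fact all trajectories eventually enter the absorbing rectangle $[0,\gamma]\times[0,\max\{0,\delta g^\ast-m\}/\epsilon]$ up to arbitrarily small slack, which proves the theorem. An equivalent route I would mention is to work with the single function $W=\delta x+y$: the cross terms in $\dot W$ collapse to $\dfrac{\delta\xi y}{1+x^2+\alpha\xi}\le\dfrac{\delta\xi}{1+\alpha\xi}\,y$, so for any fixed $\lambda>0$ one obtains $\dot W+\lambda W\le M$ with $M$ the sum of the maxima of two downward-opening parabolas (one in $x$, one in $y$), whence $W(t)\le W(0)e^{-\lambda t}+M/\lambda$. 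The only mild subtlety — the ``hard part,'' such as it is — is confirming that the functional-response ratio $g$ is bounded over the entire half-line and that the predator inequality closes in every parameter regime (in particular when $\delta g^\ast\le m$, where there is no positive logistic ceiling and one instead argues monotone decay); everything else is a routine application of the comparison principle.
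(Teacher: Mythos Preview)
Your argument is correct. Bounding $x$ first by the logistic ceiling and then bounding $y$ via the uniform bound $g^\ast=\max\{1,\xi/(1+\alpha\xi)\}$ on the functional-response ratio, together with the $-\epsilon y^2$ damping, closes cleanly in every parameter regime; the split into the two cases $\delta g^\ast\lessgtr m$ is handled properly.

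This is, however, a genuinely different route from the paper's proof. The paper does \emph{not} decouple: it works with the single scalar $W=x+\tfrac{1}{\delta}y$, shows $\dot W+KW\le M$ for suitable constants (the predation terms cancel, leaving only the bounded residual $\xi y/(1+x^2+\alpha\xi)$, which is controlled by the $-\tfrac{\epsilon}{\delta}y^2$ term), and finishes with Gronwall. Your alternative paragraph with $W=\delta x+y$ is exactly this argument up to scaling, so you have in fact sketched both proofs. The trade-offs: your decoupled comparison is slightly more elementary, yields explicit componentwise bounds, and makes the absorbing rectangle transparent; the paper's Lyapunov-type approach is a single inequality and generalises more readily to higher-dimensional coupled systems where one cannot isolate each equation so easily. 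Either is entirely adequate here.
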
 

\begin{proof}
	We define $W = x + \frac{1}{\delta}y$. Now, for any $K > 0$, we consider,
	\begin{equation*}
		\begin{split}
			\frac{\mathrm{d} W}{\mathrm{d} t} + K W &= \frac{\mathrm{d} x}{\mathrm{d} t} + \frac{1}{\delta} \frac{\mathrm{d} y}{\mathrm{d} t} + K x + \frac{K}{\delta} y \\
			&=  x \left( 1 - \frac{x}{\gamma} \right) - \frac{x^2 y}{1 + x^2 + \alpha \xi } \\
			& \ \ +\frac{1}{\delta}  \left( \frac{\delta y \left(x^2 + \xi \right)}{1+x^2+\alpha \xi} - m y - \epsilon y^2 \right) + K x + \frac{K}{\delta} y \\
			&= x -\frac{x^2}{\gamma} - \frac{x^2y}{1+x^2+\alpha \xi} +  \left( \frac{x^2 + \xi}{1 + x^2 + \alpha \xi} \right) y \\  & \ \  - \frac{m}{\delta} y - \frac{\epsilon}{\delta} y^2+ K x + \frac{K}{\delta} y \\
			&= (1+K) x -\frac{x^2}{\gamma} + \frac{\xi y}{1 + x^2 + \alpha \xi} + \frac{K-m}{\delta} y  - \frac{\epsilon}{\delta} y^2  \\
			& \leq \frac{\gamma (1+K)^2}{4} +  \frac{\xi}{\epsilon } + \frac{(K-m)^2}{4 \epsilon} = M (,say)\\
		\end{split}
	\end{equation*}
	$$\frac{\mathrm{d} W}{\mathrm{d} t} + K W \leq M.$$
	Using Gronwall's inequality \cite{howard1998gronwall}, we now find an upper bound on $W(t)$. 
	
	This inequality is in the standard linear first-order form, and we solve it by multiplying both sides by an integrating factor $e^{Kt}$. This simplify the above inequality:
	$$\frac{\mathrm{d}}{\mathrm{d} t} (W(t) e^{Kt}) \leq M e^{Kt}.$$
	Now, integrating both sides from $0$ to $t$, we get
	$$0 \leq W(t) \leq \frac{M}{K} (1 - e^{-Kt}) + W(0) e^{-Kt}.$$
	Therefore, $0 < W(t) \leq \frac{M}{K}$ as $t \rightarrow \infty$. This demonstrates that the solutions of system (\ref{3iscd}) are ultimately bounded, thereby proving Theorem \autoref{3iscdbound}.
\end{proof}

The \text{Picard-Lindel\"of theorem} guarentees the existence of a unique solution that exists locally in time for the system (\ref{3iscd}), given any initial conditions $x(0) = x_0 > 0$ and $y(0) = y_0 > 0$. This happens because the RHS terms in (\ref{3iscd}) are continuous and locally Lipschitz. Since the solution does not blow up in finite time (i.e., that the solution exists for all $t \geq 0$), global existence is also guaranteed.

\section{Existence of Equilibria} \label{sec:3iscdequil}

In this section, we investigate the existence of various equilibria that system (\ref{3iscd}) admits and study their stability nature. We first discuss the nature of nullclines of the considered system and the asymptotic behavior of its trajectories. We consider the biologically feasible parametric constraint $\delta > m$. 

The prey nullclines of the system (\ref{3iscd}) are given by 
$$ x = 0 , \  \ 1- \frac{x}{\gamma}  - \frac{x y}{1+x^2 + \alpha \xi} = 0. $$

The predator nullclines of the system (\ref{3iscd}) are given by
$$ y = 0, \ \ \frac{\delta (x^2 + \xi)}{1+x^2+ \alpha \xi} - m - \epsilon y = 0. $$

Upon simplification, the non trivial predator nullcline is given by
\begin{equation} \label{3iscdpredator}
	y = \frac{(\delta - m) x^2 + \delta \xi - m (1+ \alpha \xi)}{\epsilon (1+x^2+\alpha \xi)}.
\end{equation}

This nullcline touches $y$-axis only at $\left(0,\frac{\delta \xi - m (1+\alpha \xi)}{\epsilon (1+\alpha \xi)}\right)$. In the absence of additional food, the nullcline touches negative $y$-axis i.e., $(0,\frac{-m}{\epsilon})$. With the provision of additional food, the predator nullcline moves upwards. It touches the positive $y$-axis only when $ \delta \xi - m (1+\alpha \xi) \geq 0$. This nullcline intersects with the positive $x$-axis at $(\sqrt{\frac{\delta \xi - m(1+\alpha \xi)}{m - \delta}},0)$ only when $\delta \xi - m(1+\alpha \xi) \leq 0$. 

Thus, the nullcline intersects only the positive $x$-axis and not the positive $y$-axis when $\delta \xi - m(1+\alpha \xi) \leq 0$, as shown in frames A and C of \autoref{3iscdiso}. Also the nullcline intersects only the positive $y$-axis and not the positive $x$-axis when $\delta \xi - m(1+\alpha \xi) \geq 0$, as shown in frames B and D of \autoref{3iscdiso}. 

Upon simplification, the non trivial prey nullcline is given by 
\begin{equation} \label{3iscdprey}
	y = \frac{\left(1-\frac{x}{\gamma}\right)(1+x^2+\alpha \xi)}{x}.
\end{equation} 

This prey nullcline passes through the point $(\gamma, 0)$ in the positive quadrant, with the $y$-axis ($x = 0$) acting as its asymptote. It lies in the positive $xy$-quadrant only when 
\begin{equation} \label{3iscdcond1}
	0 < x < \gamma.
\end{equation}
and remains negative outside this range. Therefore, (\ref{3iscdcond1}) becomes a necessary condition for the existence of interior equilibrium. 

The slope of the nullcline is
$$\frac{\mathrm{d} y}{\mathrm{d} x} = \frac{- \frac{2 x^3}{\gamma} + x^2 - 1 - \alpha \xi}{x^2}.$$

The slope of prey nullcline is a cubic equation in $x$ with the discriminant given by $$\triangle = \frac{4}{\gamma^2} (1 + \alpha \xi ) \left(\gamma^2 - 27 (1+\alpha \xi)\right).$$

The prey nullcline exhibits a crest and trough as long as $\gamma > 3 \sqrt{3 (1+\alpha \xi)}$ (i.e., $\triangle > 0$), as shown in frames A and B of \autoref{3iscdiso}. It will be monotonically decreasing otherwise (i.e., $\triangle < 0$), as shown in frames C and D of \autoref{3iscdiso}.

\begin{figure}[ht]
	\centering
	\includegraphics[width=\textwidth]{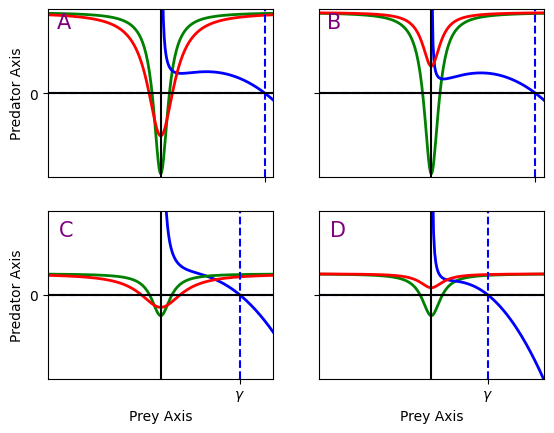}
	\caption{The possible configurations for the prey and predator nullclines of the system (\ref{3iscd}).}
	\label{3iscdiso}
\end{figure}

The possible configurations for the prey and predator nullclines are presented in \autoref{3iscdiso}. In this figure, the solid blue and red lines represent the prey and predator nullclines respectively. The solid green line represents the predator nullcline in the absence of additional food. From the qualitative theory of nullclines, it is observed that the system admits interior equilibrium in the case A and C only if the point of intersection of the predator nullcline is less than $\gamma$. When $\delta \xi - m(1+\alpha \xi) \leq 0$, $\sqrt{\frac{\delta \xi - m(1+\alpha \xi)}{m - \delta}} < \gamma \implies 0 > \delta \xi - m(1+\alpha \xi) > - \gamma^2 (\delta - m)$.  When $\delta \xi - m(1+\alpha \xi) > 0$, the predator nullcline never touches positive $x$ axis. However, in both cases, condition (\ref{3iscdcond1}) holds for the interior equilibrium. 

The system (\ref{3iscd}) always admits the points $E_0 = (0,0)$ and $E_1 = (\gamma , 0)$ as their trivial and axial equilibrium respectively. It also admits another axial equilibrium $E_2 = \left(0,\frac{\delta \xi - m (1+\alpha \xi)}{\epsilon (1+\alpha \xi)}\right)$ if $\delta \xi - m(1+\alpha \xi) > 0$. In the absence of additional food, this axial equilibrium $E_2 = (0,\frac{-m}{\epsilon})$ does not exist in the positive $xy$-quadrant.

The interior equilibrium of the system (\ref{3iscd}), if exists, is given by $E^* = (x^*, y^*)$  where 

\begin{equation} \label{3iscdystar}
	y^* =  \frac{(\delta - m) (x^*)^2 + \delta \xi - m (1+\alpha \xi)}{\epsilon (1 + (x^*)^2 + \alpha \xi )},
\end{equation}

and $x^*$ should satisfy the following cubic equation
\begin{equation} \label{3iscdxstar}
	\begin{split}
		\frac{\epsilon}{\gamma} (x^*)^5 - \epsilon (x^*)^4 + \left( \delta - m + \frac{2 \epsilon}{\gamma} (1 + \alpha \xi) \right) (x^*)^3 &  \\
		- 2 \epsilon (1+\alpha \xi) (x^*)^2 + \left(\frac{\epsilon}{\gamma} (1+\alpha \xi)^2 + \delta \xi - m (1+\alpha \xi) \right) (x^*) - \epsilon (1+\alpha \xi)^2 &= 0.
	\end{split}
\end{equation}

These conditions can be summarized as follows:
\begin{lem}\label{3iscdintcond}
	The system (\ref{3iscd}) admits an interior equilibrium $E^* = (x^*,y^*)$ which is a solution of the equations (\ref{3iscdystar}) and (\ref{3iscdxstar}) that satisfies the conditions  $\delta \xi - m(1+\alpha \xi) > - \gamma^2 (\delta - m)$ and $\frac{\epsilon}{1 + \frac{\epsilon}{\gamma}} < x^* < \gamma$.
\end{lem}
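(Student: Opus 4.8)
The plan is to realize the interior equilibrium as an intersection, inside the open first quadrant, of the two non-trivial nullclines, and thereby reduce the statement to locating a root of the polynomial equation (\ref{3iscdxstar}). Since an interior equilibrium has $x^{*}>0$ and $y^{*}>0$, dividing the prey equation of (\ref{3iscd}) by $x^{*}$ and the predator equation by $y^{*}$ shows that $(x^{*},y^{*})$ must lie simultaneously on the curves (\ref{3iscdprey}) and (\ref{3iscdpredator}); eliminating $y$ between these two --- the denominator $1+x^{2}+\alpha\xi$ never vanishes, so clearing it is harmless --- produces exactly (\ref{3iscdxstar}), and $y^{*}$ is then read off from (\ref{3iscdystar}). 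Conversely, any root $x^{*}$ of (\ref{3iscdxstar}) with $0<x^{*}<\gamma$ satisfies the equivalent identity $\epsilon(1-x^{*}/\gamma)(1+(x^{*})^{2}+\alpha\xi)^{2}=x^{*}[(\delta-m)(x^{*})^{2}+\delta\xi-m(1+\alpha\xi)]$; dividing this by $\epsilon x^{*}(1+(x^{*})^{2}+\alpha\xi)$ recovers the prey-nullcline value $y^{*}=\frac{(1-x^{*}/\gamma)(1+(x^{*})^{2}+\alpha\xi)}{x^{*}}$, which is positive by (\ref{3iscdcond1}) and equals (\ref{3iscdystar}); and the positivity of the left side of that identity forces $(\delta-m)(x^{*})^{2}+\delta\xi-m(1+\alpha\xi)>0$, so the expression (\ref{3iscdystar}) is consistently positive. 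Hence the lemma is equivalent to the claim that, under the stated hypotheses, the left side $h(x)$ of (\ref{3iscdxstar}) has a zero in the interval $(\frac{\epsilon}{1+\epsilon/\gamma},\ \gamma)$.

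Next I would run an intermediate-value argument on $[0,\gamma]$. Evaluating (\ref{3iscdxstar}) at the endpoints gives $h(0)=-\epsilon(1+\alpha\xi)^{2}<0$ and, after all $\epsilon$-dependent terms cancel in pairs, $h(\gamma)=\gamma\,[(\delta-m)\gamma^{2}+\delta\xi-m(1+\alpha\xi)]$, which is strictly positive precisely when $\delta\xi-m(1+\alpha\xi)>-\gamma^{2}(\delta-m)$, i.e.\ under the first hypothesis. Continuity of $h$ then yields a zero in $(0,\gamma)$; this already delivers $x^{*}<\gamma$ and shows that the first condition of the lemma is the natural existence hypothesis. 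For the remaining estimates it is convenient to carry the equivalent compact form $h(x)=-\epsilon(1-x/\gamma)(x^{2}+1+\alpha\xi)^{2}+(\delta-m)x^{3}+[\delta\xi-m(1+\alpha\xi)]\,x$, obtained by regrouping the $\epsilon$-terms of (\ref{3iscdxstar}) into a perfect square; it makes the two endpoint values transparent and separates a manifestly negative block from the two possibly positive terms.

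The third step --- which I expect to be the genuine obstacle --- is to move the left end of the bracketing interval from $0$ up to $\frac{\epsilon}{1+\epsilon/\gamma}=\frac{\epsilon\gamma}{\gamma+\epsilon}$, i.e.\ to show $h(x)<0$ for $0<x\le\frac{\epsilon\gamma}{\gamma+\epsilon}$, which forces the sign change found above into $(\frac{\epsilon\gamma}{\gamma+\epsilon},\gamma)$. Using the compact form: if $(\delta-m)x^{2}+\delta\xi-m(1+\alpha\xi)\le 0$ then both trailing terms are non-positive and $h(x)<0$ is immediate; in the complementary range one must dominate $(\delta-m)x^{3}+[\delta\xi-m(1+\alpha\xi)]x$ by $\epsilon(1-x/\gamma)(x^{2}+1+\alpha\xi)^{2}$ for $x\le\frac{\epsilon\gamma}{\gamma+\epsilon}$, working mainly against the constant part $\epsilon(1+\alpha\xi)^{2}\ge\epsilon$ together with the $-\epsilon x^{2}$ and $-\epsilon x^{4}$ contributions that remain once the factor $1-x/\gamma$ is pulled out. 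The threshold has the shape of the harmonic-type combination $1/(1/\epsilon+1/\gamma)$, which signals that the bound should be organized by comparing $\epsilon\,x$ against $\gamma$-weighted pieces of $(x^{2}+1+\alpha\xi)^{2}$; making the numerical constants collapse to exactly this value is where care will be required, and is the part of the proof I am least certain admits a short argument.

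Finally, once a zero $x^{*}$ of $h$ is secured in $(\frac{\epsilon\gamma}{\gamma+\epsilon},\gamma)$, I define $y^{*}$ by (\ref{3iscdystar}); the identity recorded in the first paragraph certifies $y^{*}>0$, and because $x^{*}\neq 0$ and $y^{*}\neq 0$ the pair $(x^{*},y^{*})$ makes both right-hand sides of (\ref{3iscd}) vanish. This yields an interior equilibrium satisfying both stated conditions, completing the proof.
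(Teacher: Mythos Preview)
The paper does not supply a formal proof of this lemma: it is introduced by ``These conditions can be summarized as follows'' and merely collects what the preceding nullcline discussion establishes. That discussion is geometric and yields only necessary conditions --- positivity of the prey nullcline (\ref{3iscdprey}) forces $0<x^{*}<\gamma$, and requiring the $x$-intercept of the predator nullcline (when it lies on the positive axis) to fall below $\gamma$ gives $\delta\xi-m(1+\alpha\xi)>-\gamma^{2}(\delta-m)$. Your route is genuinely different: you recast the problem algebraically and run an intermediate-value argument on the quintic $h$. Your endpoint evaluations $h(0)=-\epsilon(1+\alpha\xi)^{2}<0$ and $h(\gamma)=\gamma[(\delta-m)\gamma^{2}+\delta\xi-m(1+\alpha\xi)]$ are correct, and together with the compact factorization $h(x)=-\epsilon(1-x/\gamma)(1+x^{2}+\alpha\xi)^{2}+x[(\delta-m)x^{2}+\delta\xi-m(1+\alpha\xi)]$ they deliver existence of a root in $(0,\gamma)$ precisely under the stated parameter inequality --- which is actually more than the paper's necessary-condition reading provides.

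On the lower bound $x^{*}>\frac{\epsilon}{1+\epsilon/\gamma}$: your caution is well placed, and the difficulty is not merely technical. The paper's nullcline analysis never derives this threshold --- it obtains only $x^{*}>0$ --- so the bound enters the lemma (and is reused in Theorem~\ref{inteqstab3iscd} and Section~\ref{sec:3iscdconseq}) without justification in the text. Your proposed sharpening, namely showing $h(x)<0$ throughout $(0,\frac{\epsilon\gamma}{\gamma+\epsilon}]$, would prove the stronger statement that \emph{every} interior equilibrium exceeds the threshold; but at $x_{0}=\frac{\epsilon\gamma}{\gamma+\epsilon}$ one has $1-x_{0}/\gamma=\frac{\gamma}{\gamma+\epsilon}$, so $h(x_{0})>0$ is equivalent to $(\delta-m)x_{0}^{2}+\delta\xi-m(1+\alpha\xi)>(1+x_{0}^{2}+\alpha\xi)^{2}$, which certainly holds once $\delta$ is large enough with the remaining parameters fixed. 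Hence $h$ can change sign below the threshold, and the step you flag as the crux cannot go through without additional hypotheses. This gap is inherited from the paper itself rather than being a defect of your approach.
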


When additional food is provided, the system (\ref{3iscd}) can have atmost $5$ interior equilibria. To get more insights into the dynamics of these prey-predator models, we perform the stability analysis in the following section.

\section{Stability of Equilibria} \label{sec:3iscdstab}

In order to obtain the asymptotic behavior of the trajectories of the system (\ref{3iscd}), the associated Jacobian matrix is given by 

$$J = \begin{bmatrix}
	\frac{\partial}{\partial x} f(x,y)  & \frac{\partial}{\partial y} f(x,y)\\
	\frac{\partial}{\partial x} g(x,y) & \frac{\partial}{\partial y} g(x,y)
\end{bmatrix},$$

where

\begin{eqnarray*}
	f(x,y) &=& x \left(1-\frac{x}{\gamma} \right)- \frac{x^2y}{1+x^2+\alpha \xi}, \\
	g(x,y) &=& \delta \left( \frac{x^2+\xi}{1 + x^2 + \alpha \xi} \right) y - m y - \epsilon y^2,
\end{eqnarray*}

and 
\begin{eqnarray*}
	\frac{\partial}{\partial x} f(x,y) &=& 1 - \frac{2 x}{\gamma} - \frac{2 x y (1 + \alpha \xi)}{(1 + x^2 + \alpha \xi)^2}, \\
	\frac{\partial}{\partial y} f(x,y) &=& -\frac{x^2}{1 + x^2 + \alpha \xi }, \\
	\frac{\partial}{\partial x} g(x,y) &=& \frac{2 \delta x y (1+(\alpha - 1) \xi)}{(1+x^2 + \alpha \xi)^2}, \\
	\frac{\partial}{\partial y} g(x,y) &=& \frac{\delta (x^2 + \xi)}{1+x^2+\alpha \xi} - m - 2 \epsilon y.
\end{eqnarray*}

At the trivial equilibrium $E_0 = (0,0)$, we obtain the jacobian as 

\begin{equation*}
	J\left( E_0 \right) = \begin{bmatrix}
		1  & 0 \\
		0 & \frac{\delta \xi - m (1+ \alpha \xi)}{1+\alpha \xi}
	\end{bmatrix}.
\end{equation*}

The eigenvalues of this jacobian matrix are $1, \frac{\delta \xi - m (1+ \alpha \xi)}{1+\alpha \xi}$. If $\delta \xi - m (1+ \alpha \xi) > 0$, then both the eigenvalues have same signs. This makes the equilibrium $E_0 = (0,0)$ unstable. If $\delta \xi - m (1+ \alpha \xi) < 0$, then both the eigenvalues will have opposite signs. This makes the point $E_0 = (0,0)$ a saddle point. In the absence of additional food, $E_0 = (0,0)$ is a saddle point. 

At the axial equilibrium $E_1 = (\gamma ,0)$, we obtain the jacobian as 

\begin{equation*}
	J(E_1) = \begin{bmatrix}
		-1  & \frac{-\gamma ^2}{1+ \gamma ^2 + \alpha \xi} \\
		0 & \frac{(\delta - m) \gamma ^2 + \delta \xi - m (1+\alpha \xi)}{\gamma ^2 + 1 + \alpha \xi}
	\end{bmatrix}.
\end{equation*}

The eigenvalues of this jacobian matrix are $-1, \frac{(\delta - m) \gamma ^2 + \delta \xi - m (1+\alpha \xi)}{\gamma ^2 + 1 + \alpha \xi}$. If $(\delta - m) \gamma ^2 + \delta \xi - m (1+\alpha \xi) > 0$, then both the eigenvalues will have opposite sign resulting in a saddle point. If $(\delta - m) \gamma ^2 + \delta \xi - m (1+\alpha \xi) < 0$, then it will be an asymptotically stable node. In the absence of additional food, $E_1 = (\gamma, 0)$ is a saddle point if $(\delta - m) \gamma ^2 > m $. Else it is a stable equilibrium. 

We now consider another axial equilibrium which exists only for the additional food provided system (\ref{3iscd}). At this axial equilibrium $E_2 = \left(0, \frac{\delta \xi - m (1+\alpha \xi)}{\epsilon (1 + \alpha \xi)}\right)$, the associated jacobian matrix is given as 

\begin{equation*}
	J(E_2) = \begin{bmatrix}
		1  & 0 \\
		0 & -\frac{\delta \xi - m (1+\alpha \xi)}{1 + \alpha \xi} 
	\end{bmatrix}.
\end{equation*}

Since this equilibrium exists in positive $xy$-quadrant only when $\delta \xi - m (1+\alpha \xi) > 0$, the eigenvalues of the associated jacobian matrix are of opposite side which resulting in a saddle equilibrium. 

The following lemmas present the stability nature of the trivial and axial equilibria. 

\begin{lem}
	The trivial equilibrium $E_0 = (0,0)$ is saddle (unstable node) if 
	\begin{equation*}
		\delta \xi - m (1+\alpha \xi) < (>)\  0.
	\end{equation*}
\end{lem}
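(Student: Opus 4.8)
The plan is to read off the stability of $E_0$ directly from the linearization, since the Jacobian at this point is triangular. First I would substitute $x=0$, $y=0$ into the general Jacobian matrix $J(x,y)$ assembled in Section~\ref{sec:3iscdstab}. The entry $\frac{\partial}{\partial x}g(x,y)$ carries a factor $xy$ and hence vanishes at $E_0$, while $\frac{\partial}{\partial y}f(x,y)=-x^2/(1+x^2+\alpha\xi)$ also vanishes there; so $J(E_0)$ is diagonal with entries $1$ and $\frac{\delta\xi-m(1+\alpha\xi)}{1+\alpha\xi}$, exactly the matrix displayed just above the statement.

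Next, because the matrix is diagonal, its eigenvalues are precisely these two diagonal entries. The first eigenvalue is $\lambda_1=1>0$. Since all model parameters are non-negative (in particular $\alpha\geq 0$ and $\xi\geq 0$, so $1+\alpha\xi>0$), the sign of the second eigenvalue $\lambda_2=\frac{\delta\xi-m(1+\alpha\xi)}{1+\alpha\xi}$ coincides with the sign of the numerator $\delta\xi-m(1+\alpha\xi)$.

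Finally I would split into the two cases named in the statement. If $\delta\xi-m(1+\alpha\xi)<0$, then $\lambda_1>0>\lambda_2$, so $E_0$ has eigenvalues of opposite sign and is therefore a saddle point. If $\delta\xi-m(1+\alpha\xi)>0$, then $\lambda_1,\lambda_2>0$, so $E_0$ is a source, i.e.\ an unstable node; this is the bracketed ``$(>)$'' alternative. Hence the claimed dichotomy follows.

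I do not anticipate any genuine obstacle: the result is a one-line consequence of the Jacobian computation already carried out, and the only point needing a word of justification is the strict positivity $1+\alpha\xi>0$, which follows from the biological feasibility of the parameters. For completeness one could note that the borderline case $\delta\xi-m(1+\alpha\xi)=0$ is deliberately excluded from the statement, since then $\lambda_2=0$ and linearization is inconclusive; this degenerate situation is precisely where the transcritical bifurcation analyzed later in Section~\ref{sec:3iscdbifur} occurs.
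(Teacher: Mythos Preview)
Your proposal is correct and follows essentially the same route as the paper: compute $J(E_0)$, observe it is diagonal with eigenvalues $1$ and $\frac{\delta\xi-m(1+\alpha\xi)}{1+\alpha\xi}$, and classify $E_0$ by the sign of the second eigenvalue. The only minor slip is in your closing remark: the degenerate case $\delta\xi-m(1+\alpha\xi)=0$ corresponds in the paper to the saddle-node bifurcation analyzed around $E_2$ (where $E_2$ collides with $E_0$), not the transcritical bifurcation at $E_1$.
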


\begin{lem}
	The axial equilibrium $E_1 = (\gamma,0)$ is stable node (saddle) if 
	\begin{equation*}
		\delta \xi - m (1+\alpha \xi) < (>)\  - (\delta - m) \gamma ^2.
	\end{equation*}
\end{lem}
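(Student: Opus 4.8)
The plan is to exploit the fact that the Jacobian $J(E_1)$ has already been computed in the preceding paragraph and is \emph{upper triangular}: the entry $\frac{\partial}{\partial x}g(x,y)=\frac{2\delta x y(1+(\alpha-1)\xi)}{(1+x^2+\alpha\xi)^2}$ carries a factor of $y$, which vanishes at $E_1=(\gamma,0)$. Hence the eigenvalues of $J(E_1)$ are simply its diagonal entries,
$$\lambda_1 = -1, \qquad \lambda_2 = \frac{(\delta-m)\gamma^2 + \delta\xi - m(1+\alpha\xi)}{\gamma^2+1+\alpha\xi}.$$

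First I would observe that $\lambda_1=-1<0$ unconditionally, so the local stability type of $E_1$ is governed entirely by the sign of $\lambda_2$. Next, since $\gamma>0$, $\alpha>0$ and $\xi\ge 0$, the denominator $\gamma^2+1+\alpha\xi$ is strictly positive, so $\operatorname{sgn}(\lambda_2)=\operatorname{sgn}\!\big((\delta-m)\gamma^2 + \delta\xi - m(1+\alpha\xi)\big)$. If this numerator is negative, both eigenvalues are real and negative and $E_1$ is a locally asymptotically stable node; if it is positive, $\lambda_1$ and $\lambda_2$ have opposite signs and $E_1$ is a saddle. Finally, rearranging $(\delta-m)\gamma^2 + \delta\xi - m(1+\alpha\xi) < 0$ gives exactly $\delta\xi - m(1+\alpha\xi) < -(\delta-m)\gamma^2$, and likewise with the inequalities reversed, which is the stated dichotomy. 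As a sanity check one recovers, in the no-additional-food limit $\xi=0$, the condition $(\delta-m)\gamma^2<m$ for stability already recorded above.

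There is essentially no analytic obstacle here — the triangular structure makes the eigenvalue computation immediate, and the only point to handle with care is the positivity of the denominator, which is what lets us replace the sign of $\lambda_2$ by the sign of its numerator. The one genuinely borderline case, $(\delta-m)\gamma^2 + \delta\xi - m(1+\alpha\xi)=0$, falls outside the lemma: there $\lambda_2=0$, $E_1$ is non-hyperbolic, and linearization is inconclusive, so a center-manifold / normal-form argument would be required. I would simply remark that this critical surface is precisely the transcritical-bifurcation locus analyzed in Section~\ref{sec:3iscdbifur} and defer the degenerate analysis to that section.
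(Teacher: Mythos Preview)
Your argument is correct and essentially identical to the paper's: both compute the upper-triangular Jacobian at $E_1$, read off the eigenvalues $-1$ and $\frac{(\delta-m)\gamma^2+\delta\xi-m(1+\alpha\xi)}{\gamma^2+1+\alpha\xi}$, and classify $E_1$ by the sign of the second eigenvalue. Your additional remarks on the positivity of the denominator, the $\xi=0$ check, and the non-hyperbolic borderline case are sound elaborations but do not change the approach.
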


\begin{lem}
	The axial equilibrium $E_2 = \left(0, \frac{\delta \xi - m (1+\alpha \xi)}{\epsilon (1 + \alpha \xi)}\right) $ exists in positive $xy$-quadrant and is saddle if 
	\begin{equation*}
		\delta \xi - m (1+\alpha \xi) >\  0.
	\end{equation*}
\end{lem}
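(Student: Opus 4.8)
The plan is to proceed in two short stages: first confirm that $E_2$ is an equilibrium lying in the open positive quadrant under the stated hypothesis, then read off its stability type from the Jacobian $J(E_2)$ already recorded above.

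For existence, I would observe that any point of the form $(0,y_0)$ automatically lies on the prey nullcline $x=0$, so it is an equilibrium of (\ref{3iscd}) exactly when it also lies on a predator nullcline; taking the non-trivial predator nullcline (\ref{3iscdpredator}) and setting $x=0$ yields $y_0 = \frac{\delta\xi - m(1+\alpha\xi)}{\epsilon(1+\alpha\xi)}$, which is precisely the second coordinate of $E_2$. Since $\epsilon>0$ and $1+\alpha\xi>0$, the sign of $y_0$ coincides with the sign of $\delta\xi - m(1+\alpha\xi)$; hence $y_0>0$, i.e. $E_2$ lies in the positive $xy$-quadrant, exactly when $\delta\xi - m(1+\alpha\xi)>0$. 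Under this hypothesis one also has $E_2\neq E_0$, so $E_2$ is a genuinely new equilibrium.

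For stability I would substitute $x=0$, $y=y_0$ into the four partial derivatives listed for $J$. The essential simplification is that both $\tfrac{\partial}{\partial y}f = -x^2/(1+x^2+\alpha\xi)$ and $\tfrac{\partial}{\partial x}g = 2\delta x y(1+(\alpha-1)\xi)/(1+x^2+\alpha\xi)^2$ carry an explicit factor of $x$ and so vanish at $x=0$, making $J(E_2)$ diagonal; moreover $\tfrac{\partial}{\partial x}f\big|_{E_2}=1$, and substituting the value of $y_0$ into $\tfrac{\partial}{\partial y}g = \delta(x^2+\xi)/(1+x^2+\alpha\xi) - m - 2\epsilon y$ and cancelling the $(1+\alpha\xi)$ denominators gives $\tfrac{\partial}{\partial y}g\big|_{E_2} = -\frac{\delta\xi - m(1+\alpha\xi)}{1+\alpha\xi}$. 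This reproduces the matrix $J(E_2)$ displayed above, with eigenvalues $1$ and $-\frac{\delta\xi - m(1+\alpha\xi)}{1+\alpha\xi}$. Under the hypothesis the second eigenvalue is strictly negative while the first is positive, so the eigenvalues have opposite signs and $E_2$ is a saddle (and, by strict inequality, hyperbolic, so the linearization determines the local phase portrait via Hartman--Grobman).

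There is no substantial obstacle here: the whole argument is a direct verification. The only point requiring a little care is the evaluation of $\tfrac{\partial}{\partial y}g$ at $E_2$ — one must cancel the $(1+\alpha\xi)$ factors correctly and track the sign — together with the observation that the single inequality $\delta\xi - m(1+\alpha\xi)>0$ simultaneously forces $y_0>0$ (existence) and forces the nontrivial eigenvalue to be negative (saddle character), which is why both halves of the lemma share the identical sufficient condition.
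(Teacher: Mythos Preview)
Your proposal is correct and follows essentially the same approach as the paper: verify that $E_2$ lies in the positive quadrant precisely when $\delta\xi - m(1+\alpha\xi)>0$, evaluate the Jacobian at $E_2$ to obtain the diagonal matrix with eigenvalues $1$ and $-\frac{\delta\xi - m(1+\alpha\xi)}{1+\alpha\xi}$, and conclude from the opposite signs that $E_2$ is a saddle. Your write-up is somewhat more explicit (checking each Jacobian entry and invoking Hartman--Grobman), but the argument is the same.
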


\subsection{Stability of Interior Equilibrium}

The interior equilibrium $E^* = (x^*,y^*)$ is the solution of system of equations  (\ref{3iscdystar}) and (\ref{3iscdxstar}) and satisfying the conditions in Lemma \ref{3iscdintcond}.

At this co existing equilibrium $E^* = (x^*, y^*)$, we obtain the jacobian as 

$$ J(E^*) = \begin{bmatrix}
	\frac{\partial}{\partial x} f(x,y)  & \frac{\partial}{\partial y} f(x,y) \\
	\frac{\partial}{\partial x} g(x,y) & \frac{\partial}{\partial y} g(x,y)
\end{bmatrix} \Bigg|_{(x^*,y^*)} . $$

The associated characteristic equation is given by

\begin{equation}
	\lambda ^2 - \text{Tr } J \bigg|_{(x^*,y^*)} \lambda + \text{Det } J \bigg|_{(x^*,y^*)} = 0.
\end{equation}

Now 

\begin{eqnarray*}
	& & \text{Det } J \bigg|_{(x^*,y^*)}  \\
	&=& \left( \frac{\partial f}{\partial x} \frac{\partial g}{\partial y} - \frac{\partial f}{\partial y} \frac{\partial g}{\partial x} \right)  \bigg|_{(\bar{x},\bar{y})} \\
	&=& \left(1 - \frac{2 x}{\gamma} - \frac{2 x y (1 + \alpha \xi)}{(1 + x^2 + \alpha \xi)^2}\right) \left(\frac{\delta (x^2 + \xi)}{1+x^2+\alpha \xi} - m - 2 \epsilon y \right) \\ 
	& & - \left(-\frac{x^2}{1 + x^2 + \alpha \xi }\right) \left(\frac{2 \delta x y (1+(\alpha - 1) \xi)}{(1+x^2 + \alpha \xi)^2}\right) \bigg|_{(x^*,y^*)}
\end{eqnarray*}

Upon simplification, we have
\begin{equation}  \label{3iscdintdet}
	\begin{split}
		\text{Det } J \bigg|_{(x^*,y^*)} = &\  \frac{2 \delta x^3 y (1 +\alpha \xi - \xi)}{(1+x^2+\alpha \xi)^3} + \epsilon x y \left(\frac{1}{\gamma} + \frac{(1+\alpha \xi - x^2) y}{(1+\alpha \xi + x^2)^2}\right)\bigg|_{(x^*,y^*)}.
	\end{split}
\end{equation} 

The determinent of the jacobian is positive when $1 + \alpha \xi - \xi > 0$ and ${x^*}^2 < 1 + \alpha \xi$.

The trace of the jacobian matrix is given by 
\begin{eqnarray*}
	\text{Tr } J \bigg|_{(x^*,y^*)}  &=& \frac{\partial f}{\partial x} \bigg|_{(x^*,y^*)} + \frac{\partial g}{\partial y} \bigg|_{(x^*,y^*)} \\
	&=&  \Bigg( 1 - \frac{2 x}{\gamma} - \frac{2 x y (1 + \alpha \xi)}{(1 + x^2 + \alpha \xi)^2} + \frac{\delta (x^2 + \xi)}{1+x^2+\alpha \xi} - m - 2 \epsilon y \Bigg)  \bigg|_{(x^*,y^*)}.
\end{eqnarray*}

Upon simplification, we have
\begin{equation}  \label{3iscdinttr}
	\begin{split}
		\text{Tr } J \bigg|_{(x^*,y^*)}  &=  \left( - \epsilon y - \frac{x}{\gamma} + \frac{xy (x^2 - 1 - \alpha \xi)}{(1+x^2 + \alpha \xi)^2} \right)  \bigg|_{(x^*,y^*)}. 
	\end{split}
\end{equation} 

The trace of the jacobian is negative when ${x^*}^2 < 1 + \alpha \xi$.

From (\ref{3iscdintdet}) and (\ref{3iscdinttr}), the following observations can be derived. 

\begin{itemize}
	\item In the absence of mutual interference (i.e., $\epsilon = 0$), the determinent (\ref{3iscdintdet}) is positive when $1 + \alpha \xi - \xi > 0$. The trace of the jacobian (\ref{3iscdinttr}) is negative when ${x^*}^2 < 1 + \alpha \xi$. Therefore, the sufficient condition for the stability of interior equilibrium is ${x^*}^2 < 1 + \alpha \xi$. This point is saddle when $1 + \alpha \xi - \xi < 0$. 
	\item When mutual interference is incorporated, the determinent of jacobian (\ref{3iscdintdet}) is positive when $1 + \alpha \xi - \xi > 0$ and ${x^*}^2 < 1 + \alpha \xi$. The trace of the jacobian (\ref{3iscdinttr}) is negative when ${x^*}^2 < 1 + \alpha \xi$. Therefore, the sufficient condition for the stability of interior equilibrium remains the same irrespective of the intra-specific competition. 
\end{itemize}

From (\ref{3iscdxstar}), it is observed that the system (\ref{3iscd}) can have atmost five real interior equilibria. Also, Lemma \ref{3iscdintcond} gives the condition on interior equilibrium as $\frac{\epsilon}{1+\frac{\epsilon}{\gamma}} < x^* < \gamma,\ \delta \xi - m (1+\alpha \xi) > - \gamma^2 (\delta - m)$. Incorporating this condition to the above obtained results, we have the following theorem dealing with the stability of the interior equilibrum point. 

\begin{thm} \label{inteqstab3iscd}
	The interior equilibrium of the system (\ref{3iscd}), if it exists, is
	\begin{enumerate} 
		\item[a. ] asymptotically stable when $1 + \alpha \xi - \xi > 0$ and $\frac{\epsilon}{1+\frac{\epsilon}{\gamma}} < x^* < \text{min } \{ \sqrt{1 + \alpha \xi}, \gamma\}$. 
		\item[b. ] saddle otherwise. 
	\end{enumerate}
\end{thm}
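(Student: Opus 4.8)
The plan is to apply the Routh--Hurwitz criterion for planar systems: for the $2\times 2$ Jacobian $J(E^*)$, the interior equilibrium is locally asymptotically stable exactly when $\operatorname{Tr} J(E^*)<0$ and $\operatorname{Det} J(E^*)>0$, and it is a saddle exactly when $\operatorname{Det} J(E^*)<0$. Since the simplified expressions for the trace and determinant at $E^*$ are already recorded in (\ref{3iscdinttr}) and (\ref{3iscdintdet}) — they follow from substituting the nullcline identities $1-\tfrac{x^*}{\gamma}=\tfrac{x^* y^*}{1+{x^*}^2+\alpha\xi}$ and $\tfrac{\delta({x^*}^2+\xi)}{1+{x^*}^2+\alpha\xi}-m=\epsilon y^*$ that hold at any interior equilibrium — the proof reduces to a sign analysis of those two quantities, using only $x^*>0$, $y^*>0$ and the existence constraints $\tfrac{\epsilon}{1+\epsilon/\gamma}<x^*<\gamma$ from Lemma \ref{3iscdintcond}.

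For part (a), assume $1+\alpha\xi-\xi>0$ and $x^*<\min\{\sqrt{1+\alpha\xi},\gamma\}$; since $x^*<\gamma$ is in any case forced by existence, the effective hypothesis is ${x^*}^2<1+\alpha\xi$. Then in (\ref{3iscdinttr}) each of the three summands $-\epsilon y^*$, $-x^*/\gamma$ and $\tfrac{x^* y^*({x^*}^2-1-\alpha\xi)}{(1+{x^*}^2+\alpha\xi)^2}$ is strictly negative, so $\operatorname{Tr} J(E^*)<0$; and in (\ref{3iscdintdet}) the term $\tfrac{2\delta{x^*}^3 y^*(1+\alpha\xi-\xi)}{(1+{x^*}^2+\alpha\xi)^3}$ is positive (as $1+\alpha\xi-\xi>0$) while $\epsilon x^* y^*\big(\tfrac1\gamma+\tfrac{(1+\alpha\xi-{x^*}^2)y^*}{(1+\alpha\xi+{x^*}^2)^2}\big)$ is positive (as $1+\alpha\xi-{x^*}^2>0$), so $\operatorname{Det} J(E^*)>0$. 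Routh--Hurwitz then yields asymptotic stability. As the bullet discussion preceding the theorem notes, the sign of $\epsilon$ never enters this argument, so the criterion is unchanged by the presence of intra-specific competition.

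For part (b) the plan is to show $\operatorname{Det} J(E^*)<0$ whenever the hypotheses of (a) fail; within the existence region ``otherwise'' means $1+\alpha\xi-\xi\le 0$ or ${x^*}^2\ge 1+\alpha\xi$. The transparent sub-case is $1+\alpha\xi-\xi<0$ with ${x^*}^2>1+\alpha\xi$, where both the first and the third summand of (\ref{3iscdintdet}) are negative. The real obstacle is the summand $\epsilon x^* y^*/\gamma$, which is always positive, so in the mixed-sign cases one must argue that it is dominated. I would do this by passing to the geometric reformulation
\[
\operatorname{sign}\!\big(\operatorname{Det} J(E^*)\big)=\operatorname{sign}\!\big(s_{\mathrm{pred}}-s_{\mathrm{prey}}\big),
\]
where $s_{\mathrm{pred}}$, $s_{\mathrm{prey}}$ denote the slopes at $x=x^*$ of the predator nullcline (\ref{3iscdpredator}) and the prey nullcline (\ref{3iscdprey}); this identity is valid because at $E^*$ one has $\tfrac{\partial f}{\partial y}\tfrac{\partial g}{\partial y}=\tfrac{\epsilon{x^*}^2 y^*}{1+{x^*}^2+\alpha\xi}>0$. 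Part (b) then becomes the statement that $E^*$ is a saddle exactly when the predator nullcline is less steep than the prey nullcline at the crossing — a comparison that can be read off the configurations in Figure \ref{3iscdiso} — or, alternatively, one may substitute the defining quintic (\ref{3iscdxstar}) for $x^*$ to cancel the $1/\gamma$ contribution. Pinning down this saddle behaviour in the mixed-sign regime of (\ref{3iscdintdet}) is the step I expect to be the main obstacle; part (a) and the $\epsilon=0$ reduction are routine sign checks.
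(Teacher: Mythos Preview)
Your treatment of part (a) is exactly the paper's argument: the paper also just reads off the signs of the simplified trace (\ref{3iscdinttr}) and determinant (\ref{3iscdintdet}) under the hypotheses $1+\alpha\xi-\xi>0$ and ${x^*}^2<1+\alpha\xi$, observes that every summand has the required sign, and invokes Routh--Hurwitz. Your remark that the $\epsilon$-term plays no role in the sufficient condition is likewise the content of the paper's second bullet preceding the theorem.

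For part (b) you have, in fact, gone further than the paper. The paper does not prove the ``saddle otherwise'' clause at all: after recording that (\ref{3iscdintdet}) is positive and (\ref{3iscdinttr}) is negative under the hypotheses of (a), it simply states the theorem. There is no argument in the paper that $\operatorname{Det}J(E^*)<0$ in the complementary region, and your observation that the always-positive term $\epsilon x^* y^*/\gamma$ obstructs a naive sign check is precisely the gap. Your slope-comparison reformulation $\operatorname{sign}(\operatorname{Det}J)=\operatorname{sign}(s_{\mathrm{pred}}-s_{\mathrm{prey}})$ is correct (since $f_y g_y=\epsilon{x^*}^2 y^*/(1+{x^*}^2+\alpha\xi)>0$ at $E^*$) and is a more promising route than anything in the paper; but note that even this will not yield ``saddle'' for \emph{every} configuration outside (a): when ${x^*}^2>1+\alpha\xi$ one can have $\operatorname{Det}J>0$ with $\operatorname{Tr}J>0$, giving an unstable focus or node rather than a saddle. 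So the obstacle you flag is real, and the dichotomy asserted in (b) appears to be an overstatement that neither you nor the paper can establish as written.
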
 

\section{Bifurcation Analysis} \label{sec:3iscdbifur}

In this section, we study the various possible bifurcations exhibited by the system (\ref{3iscd}). We prove the transcritical and saddle-node bifurcations analytically. Further, we numerically simulate Hopf bifurcation, Focus-node transitions and Hysterisis loop in addition to these two bifurcations. 

\subsection{Transcritical Bifurcation}
Within this subsection, we derive the conditions for the existence of transcritical bifurcation near the equilibrium point $E_1 = (\gamma,0)$ using the parameter $\xi$ as the bifurcation parameter.

\begin{thm}
	When the parameter satisfies $1+ (1-\alpha)\gamma^2 \neq 0,\ \delta - m \alpha \neq 0$ and $\xi = \xi^* = \frac{m (1 + \gamma^2)-\delta \gamma^2}{\delta - m \alpha}$, a transcritical bifurcation occurs at $E_1 = (\gamma,0)$ in the system (\ref{3iscd}).
\end{thm}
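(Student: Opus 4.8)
The plan is to verify the hypotheses of Sotomayor's theorem for transcritical bifurcation at the equilibrium $E_1=(\gamma,0)$ with $\xi$ as the bifurcation parameter. First I would observe that at $\xi=\xi^*$ the second eigenvalue of $J(E_1)$, namely $\lambda_2 = \frac{(\delta-m)\gamma^2 + \delta\xi - m(1+\alpha\xi)}{\gamma^2+1+\alpha\xi}$, vanishes: solving $(\delta-m)\gamma^2 + (\delta - m\alpha)\xi - m = 0$ gives exactly $\xi^* = \frac{m(1+\gamma^2)-\delta\gamma^2}{\delta - m\alpha}$, so $E_1$ is a non-hyperbolic equilibrium with a simple zero eigenvalue at $\xi=\xi^*$ (the other eigenvalue being $-1\neq 0$). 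The condition $\delta - m\alpha\neq 0$ is what makes $\xi^*$ well-defined, and $1+(1-\alpha)\gamma^2\neq 0$ will turn out to be the genuine transversality (non-degeneracy) condition appearing in the second-order term.

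Next I would compute the right null vector $v$ of $J(E_1)\big|_{\xi^*}$ and the left null vector $w$ of $J(E_1)^{T}\big|_{\xi^*}$. Since $J(E_1)$ is upper triangular with diagonal $(-1,0)$, the eigenvector for the zero eigenvalue is found immediately: $w=(0,1)^{T}$ (left null vector, picking out the second row), and $v=(v_1,1)^{T}$ where $v_1$ is determined by $-v_1 - \frac{\gamma^2}{1+\gamma^2+\alpha\xi^*}=0$, i.e. $v_1 = -\frac{\gamma^2}{1+\gamma^2+\alpha\xi^*}$. Then I would evaluate the three Sotomayor quantities at $(E_1,\xi^*)$: (i) $w^{T}F_\xi$, where $F_\xi$ is the partial derivative of the vector field with respect to $\xi$; since the first component of $F$ has $\xi$-dependence only through $\alpha\xi$ in a denominator and $y=0$ at $E_1$, that entry drops out, and the relevant term is $\partial g/\partial\xi$ evaluated at $y=0$, which is $0$ — so one must instead check $w^{T}[DF_\xi\, v]$; (ii) $w^{T}[D^2F(v,v)]$, the key non-degeneracy term, which after substituting $v$ and the values at $E_1$ should reduce (up to a nonzero factor) to an expression proportional to $1+(1-\alpha)\gamma^2$. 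I would carry out these evaluations and show $w^{T}F_\xi = 0$, $w^{T}[DF_\xi v]\neq 0$, and $w^{T}[D^2F(v,v)]\neq 0$ under the stated hypotheses, which are precisely the conditions Sotomayor's theorem requires for a transcritical bifurcation.

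Concretely, the second derivative computation is the one to watch: with $g(x,y) = \frac{\delta(x^2+\xi)}{1+x^2+\alpha\xi}y - my - \epsilon y^2$, the Hessian terms $g_{xx}, g_{xy}, g_{yy}$ at $(\gamma,0,\xi^*)$ must be assembled into $w^{T}[D^2F(v,v)] = v_1^2\, g_{xx} + 2v_1 v_2\, g_{xy} + v_2^2\, g_{yy}$ with $v_2=1$. Here $g_{xx}\big|_{y=0}=0$ and $g_{yy}=-2\epsilon$, while $g_{xy}\big|_{(\gamma,0,\xi^*)} = \frac{2\delta\gamma(1+(\alpha-1)\xi^*)}{(1+\gamma^2+\alpha\xi^*)^2}$, so the non-degeneracy reduces to $2v_1 g_{xy} - 2\epsilon \neq 0$; simplifying $1+(\alpha-1)\xi^*$ using $\xi^* = \frac{m(1+\gamma^2)-\delta\gamma^2}{\delta-m\alpha}$ should produce a factor proportional to $1+(1-\alpha)\gamma^2$, explaining that hypothesis. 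I expect the main obstacle to be exactly this bookkeeping — correctly forming the multilinear Sotomayor expressions, substituting $\xi^*$, and algebraically extracting the factor $1+(1-\alpha)\gamma^2$ (and confirming the $F_\xi$-type term is nonzero) without sign or denominator errors; the conceptual content (simple zero eigenvalue plus Sotomayor's three transversality conditions) is routine once the vectors $v,w$ are in hand.
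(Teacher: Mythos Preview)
Your plan is the same Sotomayor verification the paper carries out, with the same left/right null vectors (yours is a scalar multiple of the paper's $V$). One misattribution to fix: the hypothesis $1+(1-\alpha)\gamma^2\neq 0$ is \emph{not} what makes $w^{T}[D^{2}F(v,v)]$ nonzero. Your own expression $2v_{1}g_{xy}-2\epsilon$ shows this: while your algebraic observation that $1+(\alpha-1)\xi^{*}=\frac{(\delta-m)\bigl(1+(1-\alpha)\gamma^{2}\bigr)}{\delta-m\alpha}$ is correct (so $g_{xy}$ carries that factor), the surviving $-2\epsilon$ term keeps $w^{T}[D^{2}F(v,v)]$ nonzero even when $1+(1-\alpha)\gamma^{2}=0$. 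The place the hypothesis is actually used is in $w^{T}[DF_{\xi}\,v]$: differentiating $G$ in $\xi$ gives $G_{\xi}=\dfrac{\delta y\bigl(1+(1-\alpha)x^{2}\bigr)}{(1+x^{2}+\alpha\xi)^{2}}$, so $\partial G_{\xi}/\partial y\big|_{(\gamma,0)}=\dfrac{\delta\bigl(1+(1-\alpha)\gamma^{2}\bigr)}{(1+\gamma^{2}+\alpha\xi)^{2}}$, and hence $w^{T}[DF_{\xi}\,v]$ is a nonzero multiple of $1+(1-\alpha)\gamma^{2}$. With that swap in the bookkeeping your argument goes through and matches the paper's proof.
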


\begin{proof}
	The jacobian matrix corresponding to equilibrium point $E_1=(\gamma,0)$ is given by
	
	\begin{equation*}
		J(E_1) = 
		\begin{bmatrix}
			-1  & \frac{-\gamma^2}{1 + \alpha \xi + \gamma^2} \\\
			0 & \frac{(\delta - m) \gamma^2 + \delta \xi - m (1+\alpha \xi)}{1 + \alpha \xi + \gamma^2}
		\end{bmatrix}.
	\end{equation*}
	The eigenvectors corresponding to the zero eigenvalues of $J(E_1)$ and $J(E_1)^{T}$ be denoted by $V$ and $W$, respectively. 
	
	\begin{equation*}
		V = \begin{bmatrix} V_1  \\ V_2 \end{bmatrix} = \begin{bmatrix}	1  \\ - \frac{1 + \alpha \xi + \gamma^2 }{\gamma^2} \end{bmatrix} , \ \ W = \begin{bmatrix} 0 \\ 1 \end{bmatrix}.
	\end{equation*}
	Note that $V_2 < 0$. Let us denote system (\ref{3iscd}) as $H = \begin{bmatrix} F  \\ G \end{bmatrix}.$
	Thus, 
	
	\begin{eqnarray*}
		H_{\xi} (E_1; \xi^*) &=&  \begin{bmatrix} 0 \\ 0 \end{bmatrix}, \\
		DH_\xi(E_1; \xi^*)V &=& 
		\begin{bmatrix}
			\frac{\partial F_\xi}{\partial x} & \frac{\partial F_\xi}{\partial y} \\
			\frac{\partial G_\xi}{\partial x} & \frac{\partial G_\xi}{\partial y}
		\end{bmatrix}
		\begin{bmatrix}
			V_1 \\
			V_2
		\end{bmatrix}
		_{(E_1; \xi^*)} \\
		&=& \frac{-1}{1+\alpha \xi + \gamma^2}
		\begin{bmatrix}
			\alpha \\
			\frac{\delta}{\gamma^2} \left(1 + (1-\alpha)\gamma^2 \right)
		\end{bmatrix}, \\ 
		D^2 H(E_1; \xi^*)(V, V) &=&
		\begin{bmatrix}
			\frac{\partial^2 F}{\partial x^2} V_1^2 + 2 \frac{\partial^2 F}{\partial x \partial y} V_1 V_2 + \frac{\partial^2 F}{\partial y^2} V_2^2 \\
			\frac{\partial^2 G}{\partial x^2} V_1^2 + 2 \frac{\partial^2 G}{\partial x \partial y} V_1 V_2 + \frac{\partial^2 G}{\partial y^2} V_2^2
		\end{bmatrix}
		_{(E_1, \xi^*)} \\
		&=& \begin{bmatrix}
			\frac{2 (1 + \alpha \xi - \gamma^2)}{ \gamma (1 + \alpha \xi + \gamma^2)} \\
			\frac{-2 \epsilon (1 + \alpha \xi + \gamma^2)^3 - 4 \delta \gamma^3 (1+\alpha \xi - \xi)}{\gamma^4 (1 + \alpha \xi + \gamma^2)}
		\end{bmatrix}.
	\end{eqnarray*}
	
	If $1+ (1-\alpha)\gamma^2 \neq 0$, we have
	
	\begin{eqnarray*}
		W^{T} H_{\xi} (E_1; \xi^*)&=& 0, \\
		W^{T} [DH_\xi(E_1; \xi^*)V]&=& \frac{-\delta (1+(1-\alpha)\gamma^2)}{\gamma^2 (1 + \alpha \xi + \gamma^2)} \neq 0, \\
		W^{T} [D^2 H(E_1; \xi^*)(V, V) ]&=& -2 \epsilon (1 + \alpha \xi + \gamma^2)^3 - 4 \delta \gamma^3 (1+\alpha \xi - \xi) \\ & & \neq 0.
	\end{eqnarray*}
	By the Sotomayor's theorem \cite{perko2013differential}, the system (\ref{3iscd}) undergoes a transcritical bifurcation around $E_1$ at $\xi = \xi^*$.
\end{proof}

In \autoref{trans3iscd}, we observe the transcritical bifurcation where the stability of interior equilibrium ($E^* = (x^*,y^*)$) and the axial eqilibrium ($E_1 = (\gamma,0)$) are exchanged at the bifurcation point $\xi = 2.0$. We depict the equilibria and their stability for the following set of parameter values, $\gamma = 1.0,\ \alpha = 1.0,\ \epsilon = 0.5,\ \delta = 8.0,\ m=6.0$. 

\begin{figure}[!ht]
	\centering
	\includegraphics[width=\textwidth]{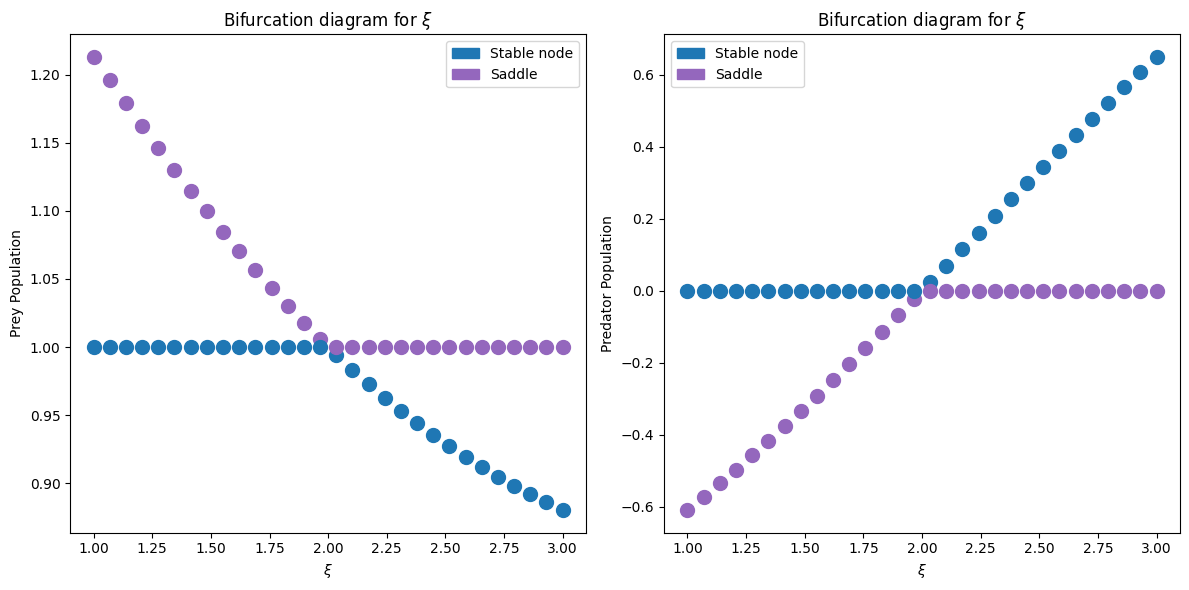}
	\caption{Transcritical bifurcation diagram around axial equilibrium $E_1 = (\gamma,0)$ with respect to the quantity of additional food $\xi$.}
	\label{trans3iscd}
\end{figure}

\subsection{Saddle-node Bifurcation}

Within this subsection, we derive the conditions for the existence of saddle-node bifurcation near the equilibrium point $E_2 = \left( 0, \frac{\delta \xi - m (1+\alpha \xi)}{\epsilon (1 + \alpha \xi)} \right)$ using the quantity of additional food ($\xi$) as the bifurcation parameter.

\begin{thm}
	When the parameter satisfies $\delta \neq m \alpha$ and $\xi = \xi^* = \frac{m}{\delta - m \alpha}$, a saddle-node bifurcation occurs at $E_2 = \left( 0, \frac{\delta \xi - m (1+\alpha \xi)}{\epsilon (1 + \alpha \xi)} \right)$ in the system (\ref{3iscd}).
\end{thm}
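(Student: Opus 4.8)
The plan is to establish the saddle-node bifurcation via Sotomayor's theorem \cite{perko2013differential}, mirroring the structure of the preceding transcritical proof but checking the saddle-node hypotheses rather than the transcritical ones. First I would substitute $\xi = \xi^* = \frac{m}{\delta - m\alpha}$ into the quantity $\delta\xi - m(1+\alpha\xi)$ and observe that it vanishes identically; consequently the $y$-coordinate of $E_2$ collapses to zero, so at the bifurcation value the equilibrium sits at the origin and the Jacobian computed in Section \ref{sec:3iscdstab} reduces to
\begin{equation*}
	J(E_2;\xi^*) = \begin{bmatrix} 1 & 0 \\ 0 & 0 \end{bmatrix}.
\end{equation*}
This matrix has a simple zero eigenvalue (the other being $1$), which is the entry point for Sotomayor's theorem.

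Next I would extract the null eigenvectors. From the block-diagonal form one reads off the right null vector $V = (0,1)^{T}$ and the left null vector $W = (0,1)^{T}$, since the zero eigenvalue is carried entirely by the predator coordinate. With $V$ and $W$ fixed, the saddle-node test reduces to two scalar conditions on $H = (F,G)^{T}$, the right-hand side of (\ref{3iscd}): the transversality condition $W^{T} H_\xi(E_2;\xi^*) \neq 0$ and the non-degeneracy condition $W^{T}\big[D^2 H(E_2;\xi^*)(V,V)\big] \neq 0$.

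The non-degeneracy condition is the routine one. Because $V$ selects the $y$-direction, $W^{T}\big[D^2 H(V,V)\big]$ isolates $\partial^2 G/\partial y^2 = -2\epsilon$, which is nonzero for any genuine intra-specific competition $\epsilon > 0$; so this condition holds automatically and carries no real difficulty.

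The hard part will be the transversality condition $W^{T} H_\xi(E_2;\xi^*) \neq 0$. A direct computation of the $\xi$-derivatives shows that $\partial F/\partial\xi$ contains an overall factor $x^2 y$ and $\partial G/\partial\xi$ contains an overall factor $y$, both of which vanish at $(0,0)$ --- precisely the point to which $E_2$ degenerates at $\xi^*$. This step is therefore delicate, and it is exactly what decides the codimension-one normal form: if the transversality value is genuinely nonzero, the saddle-node claim follows at once from Sotomayor, whereas if it vanishes one must descend to a finer analysis, tracking how the feasible equilibrium $E_2$ emerges from the invariant axis $x=0$ and interacts with $E_0$ as $\xi$ crosses $\xi^*$, in order to pin down the correct local picture. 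Resolving this degeneracy --- confirming a nonzero transversality and thereby the saddle-node structure asserted in the statement --- is where I expect the real content and the main obstacle of the theorem to lie.
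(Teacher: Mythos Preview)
Your plan is structurally identical to the paper's: Sotomayor's theorem at $E_2$, the same Jacobian $\begin{bmatrix}1&0\\0&0\end{bmatrix}$ at $\xi=\xi^*$, the same null vectors $V=W=(0,1)^T$, and the same non-degeneracy computation $W^T[D^2H(V,V)]=\partial^2 G/\partial y^2=-2\epsilon\neq 0$. Where you diverge from the paper is that you flag the transversality quantity $W^T H_\xi(E_2;\xi^*)$ as the delicate point and suspect it may vanish. Your suspicion is correct: the paper computes $W^T H_\xi(E_2;\xi^*)=\dfrac{\delta\big(\delta\xi-m(1+\alpha\xi)\big)}{\epsilon(1+\alpha\xi)^3}$ and asserts it is nonzero when $\delta\neq m\alpha$, but $\xi^*$ is \emph{defined} by $\delta\xi-m(1+\alpha\xi)=0$, so this expression is identically zero at $\xi=\xi^*$. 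Thus the paper does not actually verify the saddle-node transversality hypothesis; the condition $\delta\neq m\alpha$ only guarantees that $\xi^*$ is finite, not that the numerator survives.

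Your instinct to look at what happens on the invariant axis $x=0$ is the right way to resolve this. Restricting the system to $x=0$ gives $\dot{y}=y\Big(\tfrac{\delta\xi-m(1+\alpha\xi)}{1+\alpha\xi}-\epsilon y\Big)$, whose equilibria are $E_0$ and $E_2$; these two collide and exchange stability as $\xi$ crosses $\xi^*$, which is the signature of a transcritical bifurcation rather than a saddle-node. So the gap you anticipated is real: the saddle-node transversality in Sotomayor's theorem fails here, and the local picture at $(E_2,\xi^*)$ is a transcritical exchange with $E_0$, not the creation/annihilation of a pair of equilibria. The paper's proof glosses over exactly the obstruction you identified.
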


\begin{proof}
	The jacobian matrix corresponding to equilibrium point $E_2 = \left( 0, \frac{\delta \xi - m (1+\alpha \xi)}{\epsilon (1+\alpha \xi)} \right)$ is given by
	
	\begin{equation*}
		J(E_2) =
		\begin{bmatrix}
			1 & 0 \\
			0 & - \frac{\delta \xi - m (1 + \alpha \xi)}{1 + \alpha \xi}
		\end{bmatrix}.
	\end{equation*}
	
	The eigenvectors corresponding to the zero eigenvalues of $J(E_2)$ and $J(E_2)^{T}$ be denoted by $V$ and $W$, respectively. 
	
	\begin{equation*}
		V = \begin{bmatrix} V_1  \\ V_2 \end{bmatrix} = \begin{bmatrix}	0  \\ 1 \end{bmatrix}, \ \ W = \begin{bmatrix} 0  \\ 1 \end{bmatrix}.
	\end{equation*}
	
	Let us denote system (\ref{3iscd}) as $H = \left[ \begin{matrix} F  \\ G \end{matrix} \right].$
	Thus, 
	
	\begin{eqnarray*}
		H_{\xi} (E_2; \xi^*) &=&  \begin{bmatrix} 0 \\ \frac{\delta (\delta \xi - m (1+\alpha\xi))}{\epsilon (1 + \alpha \xi)^3} \end{bmatrix}, \\
		DH_\xi(E_2; \xi^*)V &=& 
		\begin{bmatrix}
			\frac{\partial F_\xi}{\partial x} & \frac{\partial F_\xi}{\partial y} \\
			\frac{\partial G_\xi}{\partial x} & \frac{\partial G_\xi}{\partial y}
		\end{bmatrix}
		\begin{bmatrix}
			V_1 \\
			V_2
		\end{bmatrix}
		_{(E_2; \xi^*)} =
		\begin{bmatrix}	0 \\
			\frac{\delta}{(1 + \alpha \xi)^2}
		\end{bmatrix}, \\
		D^2 H(E_2; \xi^*)(V, V) &=&
		\begin{bmatrix}
			\frac{\partial^2 F}{\partial x^2} V_1^2 + 2 \frac{\partial^2 F}{\partial x \partial y} V_1 V_2 + \frac{\partial^2 F}{\partial y^2} V_2^2 \\
			\frac{\partial^2 G}{\partial x^2} V_1^2 + 2 \frac{\partial^2 G}{\partial x \partial y} V_1 V_2 + \frac{\partial^2 G}{\partial y^2} V_2^2
		\end{bmatrix}_{(E_2, \xi^*)} = \begin{bmatrix} 0 \\ - 2 \epsilon	\end{bmatrix}.
	\end{eqnarray*}
	
	If $\delta \neq m \alpha$, we have
	
	\begin{eqnarray*}
		W^{T} H_{\xi} (E_2; \xi^*) &=&  \frac{\delta (\delta \xi - m (1+\alpha\xi))}{\epsilon (1 + \alpha \xi)^3}  \neq 0, \\
		W^{T} [DH_\xi(E_2; \xi^*)V] &=& \frac{\delta}{(1 + \alpha \xi)^2} \neq 0, \\
		W^{T} [D^2 H(E_2; \xi^*)(V, V) ] &=& - 2 \epsilon \neq 0.
	\end{eqnarray*}
	By the Sotomayor's theorem \cite{perko2013differential}, the system (\ref{3iscd}) undergoes a saddle-node bifurcation around $E_2$ at $\xi = \xi^*$.
\end{proof}

In \autoref{saddle3iscd}, the saddle-node bifurcation around the another axial equilibrium $E_2$ with respect to $\xi$ is discussed. For this same set of parameter values, both the equilibria $E^*$ and $E_2$ exist and move towards each other as $\xi$ reduces. Further at $\xi = 3.0$, both ($E^*$ and $E_2$) collide and after which only $E^*$ exists which ensures the happening of saddle-node bifurcation. When $\xi < 3.0$, then there does not exist any prey-free equilibrium $E_2$. The remaining parameters are as follows: $\gamma = 1.0,\ \alpha = 1.0,\ \epsilon = 0.5,\ \delta = 8.0,\ m=6.0$.

\begin{figure}[!ht]
	\centering
	\includegraphics[width=\textwidth]{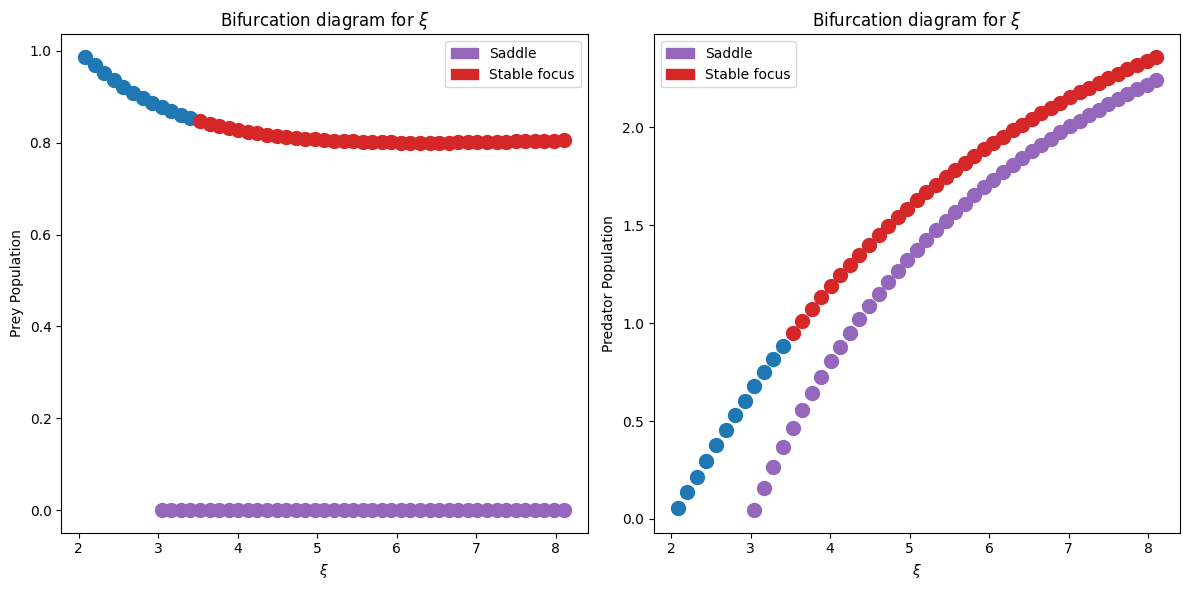}
	\caption{Saddle-node bifurcation diagram around axial equilibrium $E_2 = \left(0,\frac{\delta \xi - m (1 + \alpha \xi)}{\epsilon (1+\alpha \xi)}\right)$ with respect to the quantity of additional food $\xi$.}
	\label{saddle3iscd}
\end{figure}

\subsection{Hopf Bifurcation}

The existence of Hopf bifurcation with respect to $\epsilon$ is depicted in \autoref{hopf3iscd}. Stable limit cycle is observed around the interior equilibrium when $\epsilon = 0.035$ and the limit cycle disappears when $\epsilon$ is increased to $0.045$. The remaining parameter values are as follows: $\gamma = 15.0,\ \alpha = 0.1,\ \xi = 0.45,\ \delta = 0.45,\ m=0.28$.

\begin{figure}[!ht]
	\centering
	\includegraphics[width=\textwidth]{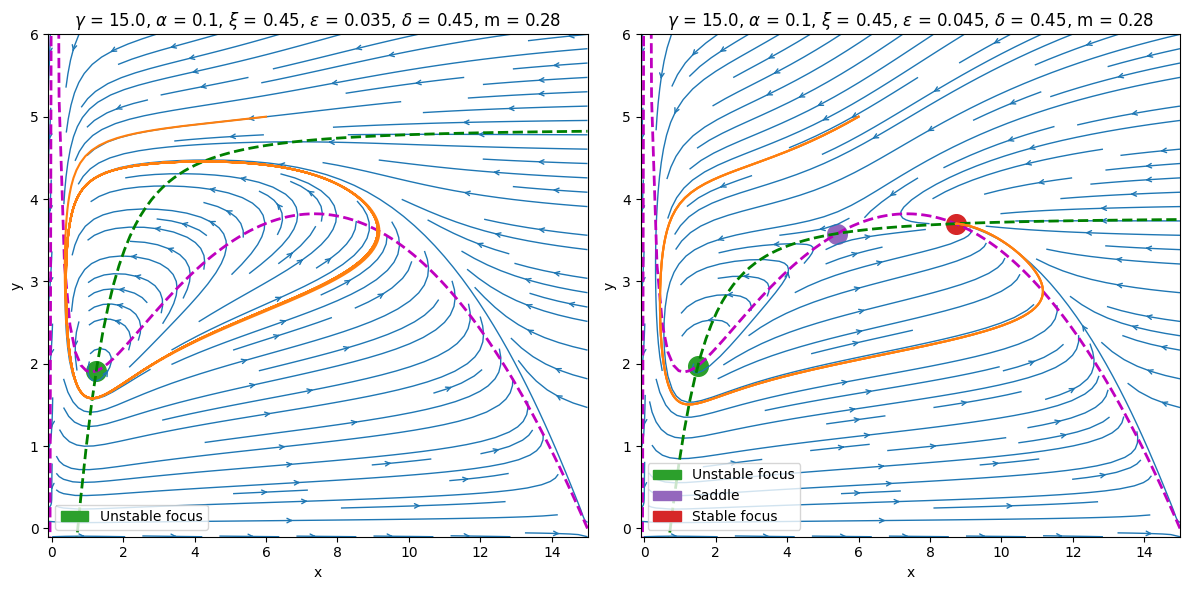}
	\caption{Supercritical Hopf bifurcation diagram with respect to the intra-specific competition $\epsilon$.}
	\label{hopf3iscd}
\end{figure}

\subsection{Hysteresis Loop}

The interior equilibria corresponding to different values of $\epsilon$ are illustrated in \autoref{SPlot3iscd}. The subplots display the intra-specific population plotted against the interior prey and predator equilibrium values, respectively. The parameter values used for this numerical simulation are $
\gamma = 15.0, \ \alpha = 0.1, \ \xi = 1.0,\  \delta = 0.3,\ m= 0.258$. This figure demonstrates the presence of multiple interior equilibria across the parameter range $\epsilon \in [0.002, 0.02]$, highlighting the system’s nonlinear response and the emergence of potential bistable regimes within this interval.

This S-shaped plot also depicts the existence of two saddle-node bifurcations at the two-turning points (folds) of the S-curves in both the panels. Also two focus-node transitions are observed in these plots. Though eigenvalue types are changing from real to complex, these are not bifurcations in the topological sense. In addition to this, a change in stable focus to unstable focus is observed in this plot. However, this doesn't guarentee the existence or disappearance of limit cycle. Therefore, this stability transition of a fixed point need not be a Hopf bifurcation. 

\begin{figure}[!ht]
	\centering
	\includegraphics[width=\textwidth]{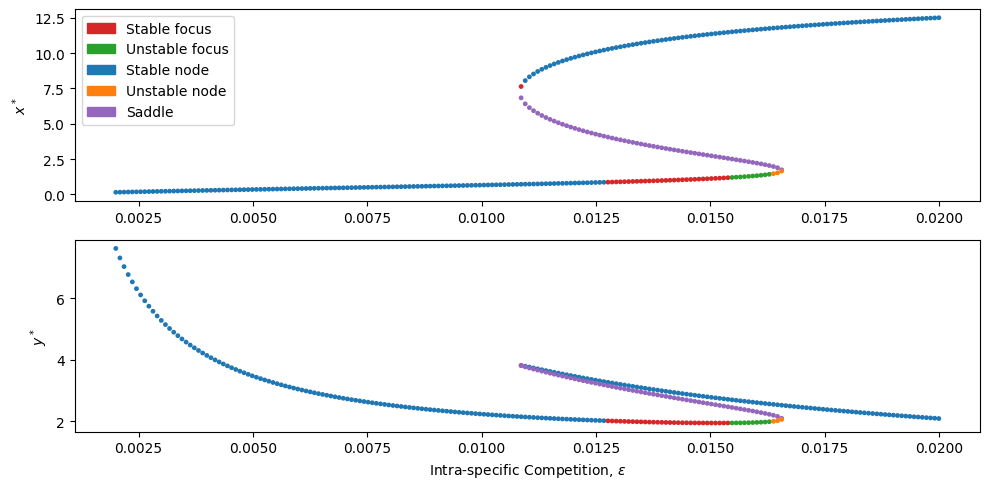}
	\caption{Nature of interior equilibria with respect to the intra-specific competition ($\epsilon$)}
	\label{SPlot3iscd}
\end{figure}

In figure \ref{SPlot3iscd}, We observe that the system also exhibits two stable interior equilibria In the region between the two saddle-node bifurcations. This bistability behavior gives rise to a phenomenon called Hysteresis. When multiple stable equilibria coexists for a range of parameters (here, $\epsilon$), the situation where the state of the system depends on the history of the parameter is referred to as Hysteresis. 

\autoref{SPlot3iscd} presents two stable equilibria branches. Let us refer the bottom and top stable branches in this S-shaped curve as the branch 1 and 2 respectively. As $\epsilon$ increases, stability remains in the branch 1 until the second saddle-node bifurcation point is reached. Then there will be a jump in the stability to the branch 2. Similarly, as $\epsilon$ reduces, the system continues to remain stable in branch 2 until it jumps back at the other fold. Therefore, the history of $\epsilon$ determines the stable equilibria in the bistable behavior. 

Now, when we consider a time-varying $\epsilon$ in the region, we observe the hysteresis loop. In figure \ref{Hysterisis3iscd}, we consider the $\epsilon$ as smooth periodic sweep using sine in the region $(0.002, 0.02)$. \autoref{Hysterisis3iscd} represents the hysteresis loop present in the plots with prey interior equilibria, predator interior equilibria, phase portrait respectively. They go up following one branch and come down following different branch. They intersect at the two saddle-node bifurcation points resulting in a loop known as Hysteresis loop.

\begin{figure}[!ht]
	\centering
	\includegraphics[width=0.8\textwidth]{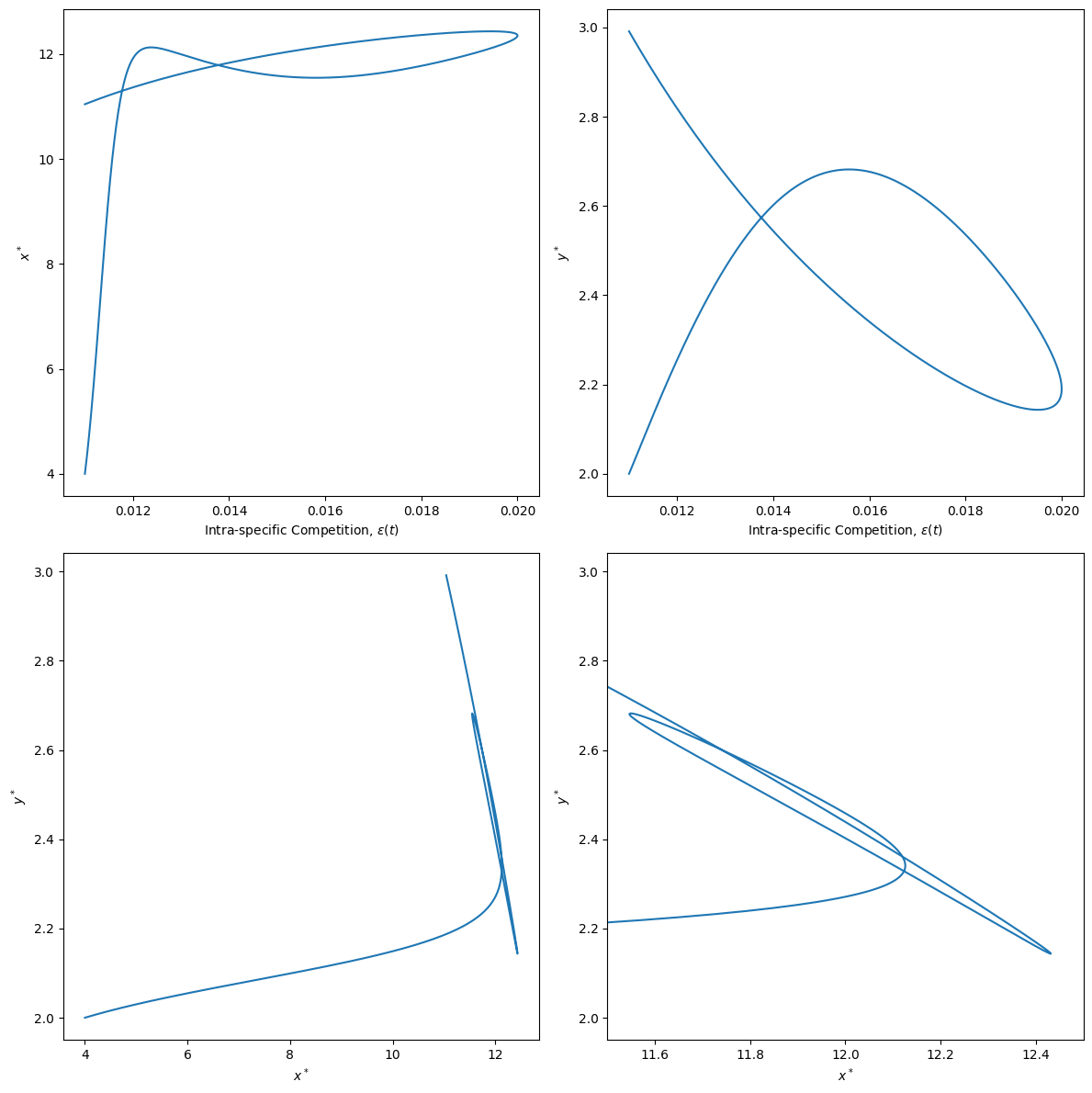}
	\caption{Hysterisis loop exhibited by the system (\ref{3iscd}) with respect to the time-dependent intra-specific competition $\epsilon (t)$.}
	\label{Hysterisis3iscd}
\end{figure}

\section{Global Dynamics}
\label{sec:3iscdglobaldynamics}

In this section, we study the global dynamics of the system (\ref{3iscd}) in the $\alpha$-$\xi$ parameter space. For this study, we divide the parameter space of the system (\ref{3iscd}) in the absence of additional food into three regions. These regions are divided based on the qualitative behaviors of the interior equilibria of the system. They are divided into three regions, namely, $R_1,\ R_2,\ R_3$ corresponding to the space where there is no interior equilibria, stable node and the stable focus respectively. \autoref{initial3iscd} depicts the phase portrait of the initial system in all three regions. 

\begin{figure}[!ht]
	\centering
	\includegraphics[width=\linewidth]{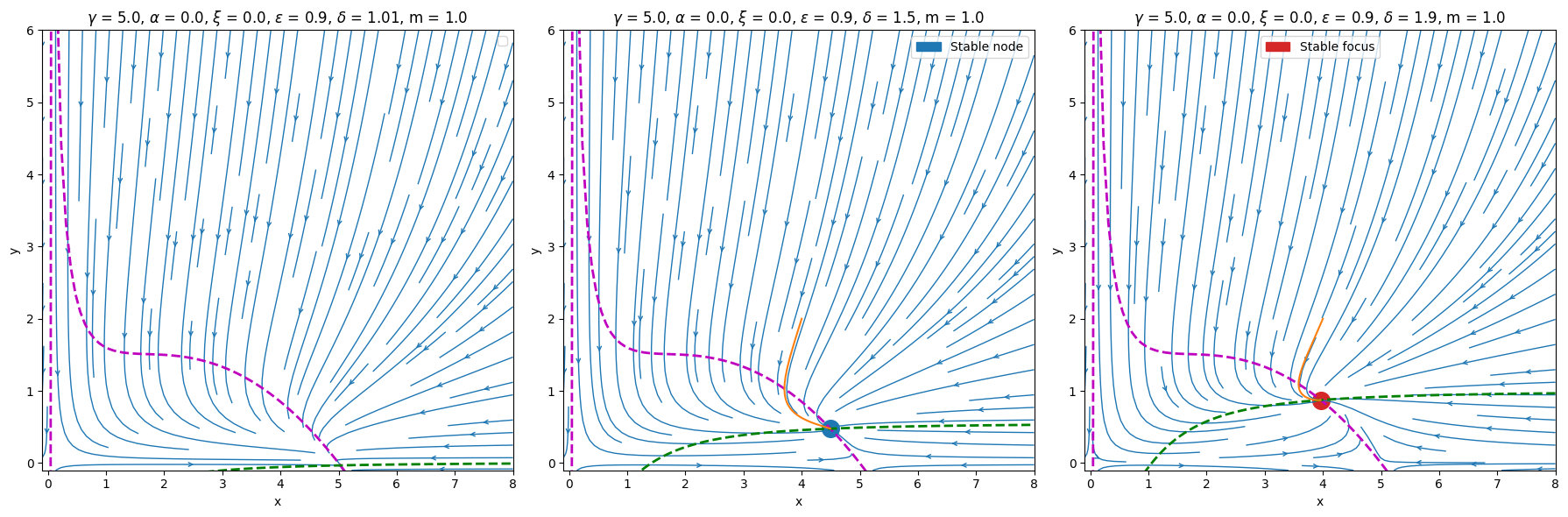}  
	\caption{Dynamics of the system (\ref{3iscd}) in the absence of additional food.}
	\label{initial3iscd}
\end{figure}

Now, in each of these regions, we study the influence of additional food by dividing the $\alpha - \xi$ parameter space into regions based on the following curves. Each of these curves divide the space into two regions based on the qualitative nature of the equilibrium point. 

\begin{equation*}
	\begin{split}
		\text{Bifurcation Curve for } E_0: &\ \phi_1(\alpha,\xi) : \  \delta \xi - m (1 + \alpha \xi) = 0. \\
		\text{Bifurcation Curve for } E_1: &\ \phi_2(\alpha,\xi) : \ \delta \xi - m (1 + \alpha \xi) + (\delta - m) \gamma^2 = 0. \\
		\text{Existence Curve for } E^*: &\ \phi_3(\alpha,\xi) : \ 1 + \alpha \xi - \xi = 0. \\			
	\end{split}
\end{equation*} 

\begin{figure}[!ht]
	\centering
	\includegraphics[width=0.7\linewidth]{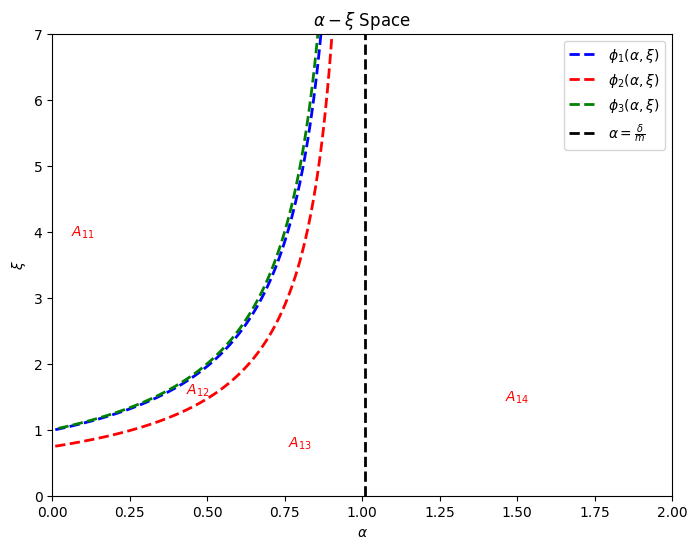} \\
	\caption{Influence of additional food on the system (\ref{3iscd}) when the parameters belong to the region $R_1$.}
	\label{r13iscd}
\end{figure}

\autoref{r13iscd} divides the $\alpha - \xi$ space into $4$ regions. In this region, there is no interior equilibrium in the absence of additional food. As additional food is provided, $E_0$ is unstable and the rest are saddle in nature in the region $A_{11}$. When we move to the region $A_{12}$, $E_2$ ceases to exist and $E^*$ will be the only stable equilibrium. By further increasing the additional food quality and quantity, we will get a bistability condition in the regions $A_{13}$ and $A_{14}$ as $E_1$ and $E^*$ are stable. However, provision of additional food in the regions $A_{13}$ and $A_{14}$ takes us to the original qualitative behavior as in the case of absence of additional food based on the initial prey and predator populations.

\begin{figure}[!ht]
	\centering
	\includegraphics[width=0.7\linewidth]{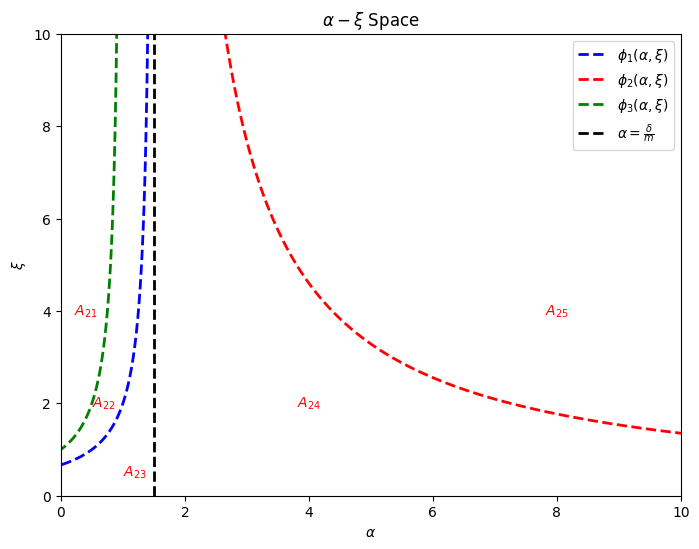} \\
	\caption{Influence of additional food on the system (\ref{3iscd}) when the parameters belong to the region $R_2$.}
	\label{r23iscd}
\end{figure}

\begin{figure}[!ht]
	\centering
	\includegraphics[width=0.7\linewidth]{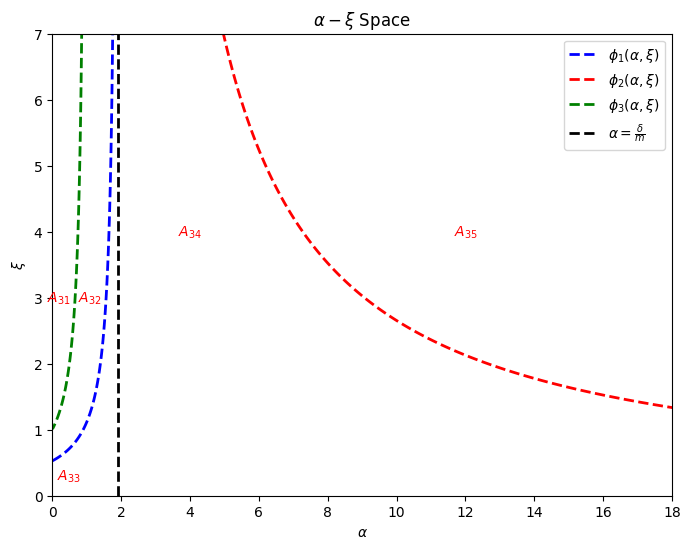} \\
	\caption{Influence of additional food on the system (\ref{3iscd}) when the parameters belong to the region $R_3$.}
	\label{r33iscd}
\end{figure}

\autoref{r23iscd}  and \autoref{r33iscd} divides the $\alpha - \xi$ space into $5$ regions each. In these regions, there is a stable interior equilibrium in the absence of additional food. As additional food is provided, $E_1,\ E_2,\ E^*$ are saddle in the regions $A_{21}$ and $A_{31}$. The interior equilibrium $E^*$ and the predator-free equilibrium $E_1$ are stable in regions $A_{25}$ and $A_{35}$ resulting in bistability of the system. In rest of the regions, only interior equilibrium is stable. Therefore, as additional food is provided the system continues to exhibit the original nature of initial system depending on the initial prey and predator populations.

\section{Consequences of providing Additional Food} \label{sec:3iscdconseq}

\indent In this section, we present the consequences of providing additional food by studying the possibility of existence in three different scenarios: Pest eradication, pest dominance and the coexistence of pest and natural enemies. 

As pest-free equilibrium $E_2$ is saddle whenever it exists, the arbitrary provision of additional food can never lead to the complete pest eradication for this system irrespective of the initial behaviour of the system in the absence of additional food. 

Now consider the case of pest dominance. In this system, there is no clear case of pest dominance possible as the natural enemy free equilibrium is stable only in the regions $A_{13}, \ A_{14},\ A_{25}$ and $A_{35}$. However in these regions, the system exhibits bistability where both the equilibria $E^*$ and $E_1$ are stable. The dynamics goes to one of these equilibria depending on the initial condition. Therefore, altering the natural enemy population can change the stability of the system and avoid pest dominance stage. 

In the rest of the regions, only interior equilibria $E^*$ is stable. As the system can exhibit upto 5 interior equilibria and the real positive equilibria are either saddle or stable, the system can exhibit bistability among the interior equilibria which highlights the importance of initial prey and predator populations. Therefore, choosing the initial populations so that the system lead to the interior equilibria with least pest population will be the best strategy to reduce pest dominance. Also it is important to note that the minimum pest population that can be reached is $\frac{\epsilon}{1+\frac{\epsilon}{\gamma}}$. This highlights the importance of intra-specific competition for the pest management.

%

\section{Time-Optimal Control Studies for Holling type-III Systems} \label{sec:3iscdtimecontrol}

In this section, we formulate and characterise two time-optimal control problems with quality of additional food and quantity of additional food as control parameters respectively. We shall drive the system (\ref{3iscd}) from the initial state $(x_0,y_0)$ to the final state $(\bar{x},\bar{y})$ in minimum time.

\subsection{Quality of Additional Food as Control Parameter}

We assume that the quantity of additional food $(\xi)$ is constant and the quality of additional food varies in $[\alpha_{\text{min}},\alpha_{\text{max}}]$. The time-optimal control problem with additional food provided prey-predator system involving Holling type-III functional response and intra-specific competition among predators (\ref{3iscd}) with quality of additional food ($\alpha$) as control parameter is given by

\begin{equation}
	\begin{rcases}
		& \displaystyle {\bf{\min_{\alpha_{\min} \leq \alpha(t) \leq \alpha_{\max}} T}} \\
		& \text{subject to:} \\
		& \dot{x}(t) = x(t) \left( 1 - \frac{x(t)}{\gamma} \right) - \frac{x^2(t) y(t)}{1 + x^2(t) + \alpha(t) \xi}, \\
		& \dot{y}(t) = \delta y(t) \left( \frac{x^2(t) + \xi}{1+x^2(t)+\alpha(t) \xi} \right) - m y(t) - \epsilon y^2(t), \\
		& (x(0),y(0)) = (x_0,y_0) \ \text{and} \ (x(T),y(T)) = (\bar{x},\bar{y}).
	\end{rcases}
	\label{3iscdalpha0}
\end{equation}

This problem can be solved using a transformation on the independent variable $t$ by introducing an independent variable $s$ such that $\mathrm{d}t = (1 + \alpha \xi + x^2) \mathrm{d}s$. This transformation converts the time-optimal control problem ($\ref{3iscdalpha0}$) into the following linear problem.

\begin{equation}
	\begin{rcases}
		& \displaystyle {\bf{\min_{\alpha_{\min} \leq \alpha(t) \leq \alpha_{\max}} S}} \\
		& \text{subject to:} \\
		& \dot{x}(s) = x \left(1 - \frac{x}{\gamma}\right) \left(1 + x^2 + \alpha \xi \right) - x^2 y, \\
		& \dot{y}(s) = \delta (x^2 + \xi) y - (1 + x^2 + \alpha \xi) (m y + \epsilon y^2), \\
		& (x(0),y(0)) = (x_0,y_0) \ \text{and} \ (x(S),y(S)) = (\bar{x},\bar{y}).
	\end{rcases}
	\label{3iscdalpha}
\end{equation}

Hamiltonian function for this problem (\ref{3iscdalpha}) is given by
\begin{equation*}
	\begin{split}
		\mathbb{H}(s,x,y,p,q) =& p \left[x \left(1 - \frac{x}{\gamma}\right) \left(1 + x^2 + \alpha \xi \right) - x^2 y\right] \\
		& + q \left[\delta (x^2 + \xi) y - (1 + x^2 + \alpha \xi) (m y + \epsilon y^2) \right] \\
		=& \left[ p x \xi \left(1 - \frac{x}{\gamma}\right) - q \xi \left(m y + \epsilon y^2 \right) \right] \alpha \\ 
		&+ \left[ p x \left(\left(1 - \frac{x}{\gamma}\right) (1 + x^2) - xy\right) + qy \left( \delta (x^2 + \xi)- (1 + x^2) (m+ \epsilon y)\right)\right]. \\
	\end{split}
\end{equation*}

Here, $p$ and $q$ are costate variables satisfying the adjoint equations 
\begin{equation*}
	\begin{split}
		\dot{p} =& -p \left[ 1+\alpha \xi  - \frac{2 (1+\alpha \xi)}{\gamma} x - 2 x y + 3 x^2 - \frac{4}{\gamma} x^3\right] -  2 q x y \left(\delta - m - \epsilon y \right), \\
		\dot{q} =& p x^2 - q \left[ \delta (x^2 + \xi) - (1 + x^2 + \alpha \xi) (m + 2 \epsilon y) \right].
	\end{split}
\end{equation*}

Since Hamiltonian is a linear function in $\alpha$, the optimal control can be a combination of bang-bang and singular controls \cite{cesari2012optimization}. Since we are minimizing the Hamiltonian, the optimal strategy is given by 

\begin{equation}
	\alpha^*(t) =
	\begin{cases}
		\alpha_{\max}, &\text{ if } \frac{\partial \mathbb{H}}{\partial \alpha} < 0\\
		\alpha_{\min}, &\text{ if } \frac{\partial \mathbb{H}}{\partial \alpha} > 0
	\end{cases}
\end{equation}
where
\begin{equation}
	\frac{\partial \mathbb{H}}{\partial \alpha} = p x \xi \left(1 - \frac{x}{\gamma}\right) - q \xi \left(m y + \epsilon y^2\right).
\end{equation}

This problem \ref{3iscdalpha} admits a singular solution if there exists an interval $[s_1,s_2]$ on which $\frac{\partial \mathbb{H}}{\partial \alpha} = 0$. Therefore, 

\begin{equation}
	\frac{\partial \mathbb{H}}{\partial \alpha} = p x \xi \left(1 - \frac{x}{\gamma}\right) - q \xi \left(m y + \epsilon y^2\right) = 0. \textit{ i.e. } \frac{p}{q} = \frac{ m y + \epsilon y^2}{x \left(1 - \frac{x}{\gamma}\right)}. \label{3iscdapbyq1}
\end{equation}

Differentiating $\frac{\partial \mathbb{H}}{\partial \alpha}$ with respect to $s$ we obtain 
\begin{equation*}
	\begin{split}    
		\frac{\mathrm{d}}{\mathrm{d}s} \frac{\partial \mathbb{H}}{\partial \alpha} =&\frac{\mathrm{d}}{\mathrm{d}s} \left[ p x \xi \left(1 - \frac{x}{\gamma}\right) - q \xi \left(m y + \epsilon y^2\right) \right] \\
		=& \xi x \left(1-\frac{x}{\gamma}\right) \dot{p} + p \xi \left(1 - \frac{2 x}{\gamma}\right) \dot{x} - \xi (m y + \epsilon y^2) \dot{q} - q \xi (m +2 \epsilon y) \dot{y}.
	\end{split}
\end{equation*}

Substituting the values of $\dot{x}, \dot{y}, \dot{p}, \dot{q}$ in the above equation, we obtain
\begin{equation*}
	\begin{split}
		\frac{\mathrm{d}}{\mathrm{d}s} \frac{\partial \mathbb{H}}{\partial \alpha} =&  \xi x \left(1-\frac{x}{\gamma}\right) \Bigg(-p \left[ 1+\alpha \xi  - \frac{2 (1+\alpha \xi)}{\gamma} x - 2 x y + 3 x^2 - \frac{4}{\gamma} x^3\right] \\
		& -  2 q x y \left(\delta - m - \epsilon y \right) \Bigg) \\
		&+ p \xi \left(1 - \frac{2 x}{\gamma}\right) \left( x \left(1 - \frac{x}{\gamma}\right) \left(1 + x^2 + \alpha \xi \right) - x^2 y \right) \\
		& - \xi (m y + \epsilon y^2) \left( p x^2 - q \left[ \delta (x^2 + \xi) - (1 + x^2 + \alpha \xi) (m + 2 \epsilon y) \right] \right) \\
		& - q \xi (m +2 \epsilon y) \left( \delta (x^2 + \xi) y - (1 + x^2 + \alpha \xi) (m y + \epsilon y^2)\right).
	\end{split}
\end{equation*}

Along the singular arc, $\frac{\mathrm{d}}{\mathrm{d}s} \frac{\partial \mathbb{H}}{\partial \alpha} = 0$. This implies that 
\begin{equation*}
	\begin{split}
		p \xi x \left[ \frac{-(1+\alpha \xi)}{\gamma} + \frac{1+\alpha \xi}{\gamma} x + x y - 2 x^2 + \frac{4}{\gamma} x^3 - \frac{2}{\gamma^2} x^4 - x y \left(m + \epsilon y\right)\right] & \\
		-q \xi y \left[ 2 x^2 \left(1 - \frac{x}{\gamma} \right) \left(\delta - m - 2 \epsilon \right) + \delta \epsilon y (x^2 + \xi)\right]   & = 0.
	\end{split}
\end{equation*}

and that 
\begin{equation} \label{3iscdapbyq2}
	\frac{p}{q} = \frac{y}{x} \ \  \frac{2 x^2 \left(1 - \frac{x}{\gamma} \right) \left(\delta - m - 2 \epsilon \right) + \delta \epsilon y (x^2 + \xi)}{\frac{-(1+\alpha \xi)}{\gamma} + \frac{1+\alpha \xi}{\gamma} x + x y - 2 x^2 + \frac{4}{\gamma} x^3 - \frac{2}{\gamma^2} x^4 - x y \left(m + \epsilon y\right)}.
\end{equation}

The solutions of the system of equations (\ref{3iscdapbyq1}) and (\ref{3iscdapbyq2}) gives the switching points of the bang-bang control.

\subsection{Quantity of Additional Food as Control Parameter}

In this section, We assume that the quality of additional food $(\alpha)$ is constant and the quantity of additional food varies in $[\xi_{\text{min}},\xi_{\text{max}}]$. The time-optimal control problem with additional food provided prey-predator system involving Holling type-III functional response and intra-specific competition among predators (\ref{3iscd}) with quantity of additional food ($\xi$) as control parameter is given by

\begin{equation}
	\begin{rcases}
		& \displaystyle {\bf{\min_{\xi_{\min} \leq \xi(t) \leq \xi_{\max}} T}} \\
		& \text{subject to:} \\
		& \dot{x}(t) = x(t) \left( 1 - \frac{x(t)}{\gamma} \right) - \frac{x^2(t) y(t)}{1 + x^2(t) + \alpha \xi(t)}, \\
		& \dot{y}(t) = \delta y(t) \left( \frac{x^2(t) + \xi(t)}{1+x^2(t)+\alpha \xi(t)} \right) - m y(t) - \epsilon y^2(t), \\
		& (x(0),y(0)) = (x_0,y_0) \ \text{and} \ (x(T),y(T)) = (\bar{x},\bar{y}).
	\end{rcases}
	\label{3iscdxi0}
\end{equation}

This problem can be solved using a transformation on the independent variable $t$ by introducing another independent variable $s$ such that $\mathrm{d}t = (1 + \alpha \xi + x^2) \mathrm{d}s$. This transformation converts the time-optimal control problem ($\ref{3iscdxi0}$) into the following linear problem.

\begin{equation}
	\begin{rcases}
		& \displaystyle {\bf{\min_{\xi_{\min} \leq \xi(t) \leq \xi_{\max}} S}} \\
		& \text{subject to:} \\
		& \dot{x}(s) = x \left(1 - \frac{x}{\gamma}\right) (1 + x^2 + \alpha \xi) - x^2 y, \\
		& \dot{y}(s) = \delta (x^2 + \xi) y - (1 + x^2 + \alpha \xi) (m y + \epsilon y^2), \\
		& (x(0),y(0)) = (x_0,y_0) \ \text{and} \ (x(S),y(S)) = (\bar{x},\bar{y}).
	\end{rcases}
	\label{3iscdxi}
\end{equation}

Hamiltonian function for this problem (\ref{3iscdxi}) is given by
\begin{equation*}
	\begin{split}
		\mathbb{H}(s,x,y,p,q) =& p \left[ x \left(1 - \frac{x}{\gamma}\right) (1 + x^2 + \alpha \xi) - x^2 y \right] \\
		&+ q \left[\delta (x^2 + \xi) y - (1 + x^2 + \alpha \xi) (m y + \epsilon y^2) \right] \\
		=& \left[ p x \left(1 - \frac{x}{\gamma}\right) \alpha + \delta q y- q \alpha y (m + \epsilon y) \right] \xi \\ 
		& + \left[ p x \left(\left(1 - \frac{x}{\gamma}\right) (1 + x^2) - xy \right) + q y \left(\delta x^2 - (1 + x^2) (m+\epsilon y)\right)\right]. \\
	\end{split}
\end{equation*}

Here, $p$ and $q$ are costate variables satisfying the adjoint equations 
\begin{equation*}
	\begin{split}
		\dot{p} =& -p \left[ 1+\alpha \xi  - \frac{2 (1+\alpha \xi)}{\gamma} x - 2 x y + 3 x^2 - \frac{4}{\gamma} x^3\right] -  2 q x y \left(\delta - m - \epsilon y \right), \\
		\dot{q} =& p x^2 - q \left[ \delta (x^2 + \xi) - (1 + x^2 + \alpha \xi) (m + 2 \epsilon y) \right].
	\end{split}
\end{equation*}

Since Hamiltonian is a linear function in $\xi$, the optimal control can be a combination of bang-bang and singular controls \cite{cesari2012optimization}. Since we are minimizing the Hamiltonian, the optimal strategy is given by 

\begin{equation}
	\xi^*(t) =
	\begin{cases}
		\xi_{\max}, &\text{ if } \frac{\partial \mathbb{H}}{\partial \xi} < 0. \\
		\xi_{\min}, &\text{ if } \frac{\partial \mathbb{H}}{\partial \xi} > 0.
	\end{cases}
\end{equation}
where
\begin{equation}
	\frac{\partial \mathbb{H}}{\partial \xi} = p x \left(1 - \frac{x}{\gamma}\right) \alpha + \delta q y- q \alpha y (m + \epsilon y). 
\end{equation}

This problem (\ref{3iscdxi}) admits a singular solution if there exists an interval $[s_1,s_2]$ on which $\frac{\partial \mathbb{H}}{\partial \xi} = 0$. Therefore, 
\begin{equation}
	\frac{\partial \mathbb{H}}{\partial \xi} = p x \left(1 - \frac{x}{\gamma}\right) \alpha + \delta q y- q \alpha y (m + \epsilon y) = 0. \textit{ i.e. } \frac{p}{q} = \frac{ y \left( \alpha (m + \epsilon y)- \delta\right)}{\alpha x \left(1 - \frac{x}{\gamma}\right)}. \label{3iscdxpbyq1}
\end{equation}

Differentiating $\frac{\partial \mathbb{H}}{\partial \xi}$ with respect to $s$ we obtain 

\begin{equation*}
	\begin{split}
		\frac{\mathrm{d}}{\mathrm{d}s} \frac{\partial \mathbb{H}}{\partial \xi} &= \frac{\mathrm{d}}{\mathrm{d}s} \left[ p x \left(1 - \frac{x}{\gamma}\right) \alpha + \delta q y- q \alpha y (m + \epsilon y) \right] \\
		&= \alpha p \left(1-\frac{2 x}{\gamma} \right) \dot{x} + q (\delta - m \alpha - 2 \epsilon \alpha y) \dot{y} + \alpha x \left(1-\frac{x}{\gamma} \right) \dot{p} + \left(\delta-\alpha (m+\epsilon y)\right) y \dot{q}.
	\end{split}
\end{equation*}

Substituting the values of $\dot{x}, \dot{y}, \dot{p}, \dot{q}$ in the above equation, we obtain
\begin{equation*}
	\begin{split}
		\frac{\mathrm{d}}{\mathrm{d}s} \frac{\partial \mathbb{H}}{\partial \xi} =& \alpha p \left(1-\frac{2 x}{\gamma} \right) \left( x \left(1 - \frac{x}{\gamma}\right) (1 + x^2 + \alpha \xi) - x^2 y \right) \\
		&+ q (\delta - m \alpha - 2 \epsilon \alpha y) \left( \delta (x^2 + \xi) y - (1 + x^2 + \alpha \xi) (m y + \epsilon y^2) \right) \\
		&+ \alpha x \left(1-\frac{x}{\gamma} \right) \Bigg( -p \left[ 1+\alpha \xi  - \frac{2 (1+\alpha \xi)}{\gamma} x - 2 x y + 3 x^2 - \frac{4}{\gamma} x^3\right] \\
		& -  2 q x y \left(\delta - m - \epsilon y \right) \Bigg) \\
		&+ \left(\delta-\alpha (m+\epsilon y)\right) y \left( p x^2 - q \left[ \delta (x^2 + \xi) - (1 + x^2 + \alpha \xi) (m + 2 \epsilon y) \right]\right).
	\end{split}
\end{equation*}

Along the singular arc, $\frac{\mathrm{d}}{\mathrm{d}s} \frac{\partial \mathbb{H}}{\partial \xi} = 0$. This implies that 

\begin{equation*}
	\begin{split}
		p x^2 \left(- 2 \alpha x \left(1 - \frac{x}{\gamma}\right)^2 +  \left( \alpha + \delta-\alpha (m+\epsilon y)\right) y \right) & \\
		+ q y \left( \delta \epsilon y \left( 1 + x^2 - \alpha x^2 \right)- 2 \alpha x^2 \left(\delta - m - \epsilon y \right) \left(1-\frac{x}{\gamma} \right)  \right) & = 0.
	\end{split}
\end{equation*}

and that 
\begin{equation} \label{3iscdxpbyq2}
	\frac{p}{q} = \frac{y}{x^2} \ \ \frac{\delta \epsilon y \left( 1 + x^2 - \alpha x^2 \right)- 2 \alpha x^2 \left(\delta - m - \epsilon y \right) \left(1-\frac{x}{\gamma} \right)}{2 \alpha x \left(1 - \frac{x}{\gamma}\right)^2 -  \left( \alpha + \delta-\alpha (m+\epsilon y)\right) y}.
\end{equation}

The solutions of the system of equations (\ref{3iscdxpbyq1}) and (\ref{3iscdxpbyq2}) gives the switching points of the bang-bang control.

\subsection{Applications to Pest Management}

In this subsection, we simulated the time-optimal control problems (\ref{3iscdalpha}) and (\ref{3iscdxi}) using CasADi in python \cite{CasADi}. We implemented the direct transcription method with multiple shooting in order to solve the time-optimal control problems. In this method, we discretize the control problem into smaller intervals using finite difference integration, specifically the fourth-order Runge-Kutta (RK4) method. By breaking the trajectory into multiple shooting intervals, the state and control variables at each node are treated as optimization variables. The dynamics of the system are enforced as constraints between nodes, allowing for greater flexibility and improved convergence when solving the nonlinear programming problem with CasADi’s solvers.

\begin{figure}[!ht]
	\centering
	\includegraphics[width=\textwidth]{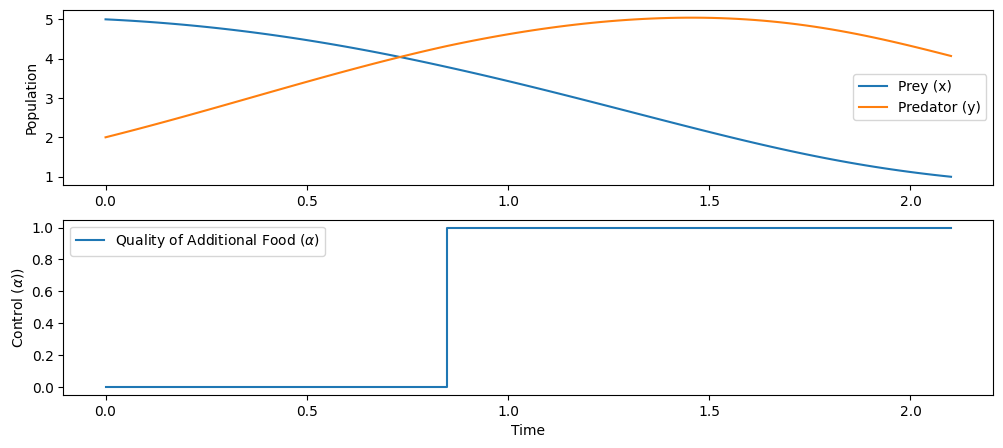}
	\caption{The optimal state trajectories and the optimal control trajectories for the time optimal control problem (\ref{3iscdalpha}).}
	\label{c3iscdalpha}
\end{figure}

\begin{figure}[!ht]
	\centering
	\includegraphics[width=\textwidth]{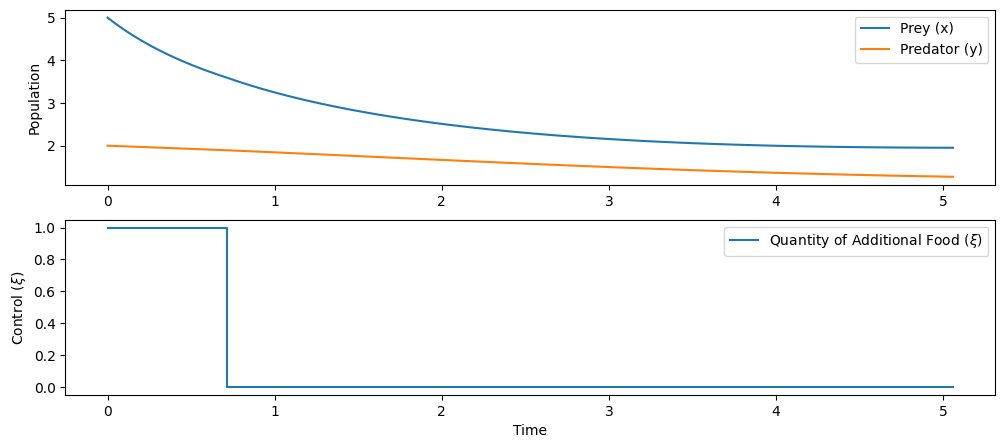}
	\caption{The optimal state trajectories and the optimal control trajectories for the time optimal control problem (\ref{3iscdxi}).}
	\label{c3iscdxi}
\end{figure}

\autoref{c3iscdalpha} illustrates the optimal state and control trajectories for the time-optimal control problem (\ref{3iscdalpha}). The simulation uses the parameter values $\gamma = 7.0,\  \xi = 0.1,\ \delta = 3.0, \ m=1.0,\ \epsilon = 0.3$, starting from the initial point $(5,2)$ and reaching the final point $(1,4)$ in an optimal time of $2.1$ units. 

\autoref{c3iscdxi} illustrates the optimal state and control trajectories for the time-optimal control problem (\ref{3iscdxi}). The simulation uses the parameter values $\gamma = 4.0,\  \alpha = 1.0,\ \delta = 2.0, \ m=1.0,\ \epsilon = 0.5$, starting from the initial point $(5,2)$ and reaching the final point $(1,4)$ in an optimal time of $5.05$ units. 

\section{Discussions and Conclusions} \label{sec:disc}

\indent This paper studies deterministic additional food provided prey-predator system exhibiting Holling type-III functional response and intra-specific competition among predators. To begin with, we proved the positivity and boundedness of solutions. As the model could exhibit atmost $5$ real roots, we proved the condition for the existence of interior equilibria in \autoref{3iscdintcond}. The conditions for stability of various equilibria is presented in \autoref{sec:3iscdstab}. From the qualitative theory of dynamical systems, we observed that the system (\ref{3iscd}) exhibits the transcritical, saddle-node and Hopf bifurcations. We further presented a detailed study on the various interior equilibria with respect to the intra-specific competition ($\epsilon$) of predators. Due to the S-shaped behavior of the interior equilibria, two saddle-node bifurcations are observed at the two folds and this further lead to the depiction of hysteresis loop. In addition to this, we presented a detailed study on the global dynamics and consequences of providing additional food. Further, we formulated the time-optimal control problems with the objective to minimize the final time in which the system reaches the pre-defined state. Here, we considered the quality and the quantity of additional food as control variables. Using the Pontraygin maximum principle, we characterized the optimal control values. We also numerically simulated the theoretical findings and applied them in the context of pest management.

This study comprehensively studied the importance of additional food and intra-specific competition in the dynamics of the system (\ref{3iscd}). The global dynamics of the system is studied in \autoref{sec:3iscdglobaldynamics}. In this section, we observed that the provision of additional food can lead to the bistability condition. The system can converge to the pest dominant equilibrium or any of the $5$ possible interior equilibria based on the initial conditions. Taking this analysis further, it is observed from the theorem \autoref{inteqstab3iscd} that the minimum equilibria pest population is $\frac{\epsilon}{1+\frac{\epsilon}{\gamma}}$. Therefore, in the case where the stable equilibrium is only any of the interior equilibria, this work gives the least amount of stable pest population, which is directly proportional to the intra-specific competition ($\epsilon$). As intra-specific competition is due to prey scarcity, higher competition leads to the difficulty in reaching the equilibrium state. 

\autoref{SPlot3iscd} depicted the nature of various equilibria with respect to intra-specific competition. This plot depicts the bistability region where the system exhibits two stable interior equilibria. As $\epsilon$ changes with time over the region $[0.002,0.02]$, the system stabilises either in the top branch or bottom branch of the bistability region depending on the path taken by the trajectory. The dependence on the history of the trajectory plays a crucial role in the biological control of pests. As pest eradication is never stable in this system, keeping the pest at low levels so that they won't harm the crop, is the next best available strategy. Therefore, the bottom branch in the bistability region is more desirable to the top branch for pest management. Therefore, to remain in the bottom branch, it is advisable to increase natural enemies population over time rather than mass release of natural enemies for this case. Also, the sudden jump after the second saddle-node bifurcation can lead to immediate increase of pests. This sudden surge is drastic not only because of the damage the increased pest can cause to the crop but also the reversibility of damage takes time as the system should reach the first saddle-node bifurcation point. These conculsions provide valuable insights to the biocontrol strategies which is a double-edged sword. 

Some of the salient features of this work include the following. This work captures the commonly observed intra-specific competition among predators for the additional food provided prey-predator system exhibiting Holling type-III functional response. A rigorous analysis on the dynamics of this system is presented in this work. In addition to this, this paper also dealt with the novel study of the time-optimal control problems by transforming the independent variable in the control system. This work has been an initial attempt dealing with the time optimal control studies for prey-predator systems involving intra-specific competition among predators. This initial exploratory research will further lead to a more sophisticated model and rigorous analysis in the context of sensitivity and estimation of parameters, controllability and observability in the future works.

\subsection*{Financial Support: }
This research was supported by National Board of Higher Mathematics (NBHM), Government of India (GoI) under project grant - {\bf{Time Optimal Control and Bifurcation Analysis of Coupled Nonlinear Dynamical Systems with Applications to Pest Management, \\ Sanction number: (02011/11/2021NBHM(R.P)/R$\&$D II/10074).}}

\subsection*{Conflict of Interests Statement: }
The authors have no conflicts of interest to disclose.

\subsection*{Ethics Statement:} 
This research did not required ethical approval.

\subsection*{Acknowledgments}
The authors dedicate this paper to the founder chancellor of SSSIHL, Bhagawan Sri Sathya Sai Baba. The contributing author also dedicates this paper to his loving elder brother D. A. C. Prakash who still lives in his heart.

\printbibliography

@article{Adhikary2021,
	author = {Prabir Das Adhikary and Saikat Mukherjee and Bapan Ghosh},
	doi = {10.1016/j.tpb.2021.05.002},
	issn = {10960325},
	journal = {Theoretical Population Biology},
	keywords = {Intraspecific competition,Multi-stability,Population dynamics,Saddle–node bifurcation,Stable stock},
	month = {8},
	pages = {44-53},
	pmid = {34052251},
	publisher = {Academic Press Inc.},
	title = {Bifurcations and hydra effects in Bazykin's predator–prey model},
	volume = {140},
	year = {2021},
}

@article{bazykin1976structural,
	title={Structural and dynamic stability of model predator-prey systems},
	author={Bazykin, AD},
	year={1976},
	publisher={RM-76-008}
}

@article{BioControl_India,
	title={Success stories in biological control: Lessons learnt},
	author={Ballal, Chandish R},
	journal={Vantage: Journal of Thematic Analysis},
	volume={3},
	number={1},
	pages={7--20},
	year={2022}
}

@Article{CasADi,
	author = {Joel A E Andersson and Joris Gillis and Greg Horn
	and James B Rawlings and Moritz Diehl},
	title = {{CasADi} -- {A} software framework for nonlinear optimization
	and optimal control},
	journal = {Mathematical Programming Computation},
	volume = {11},
	number = {1},
	pages = {1--36},
	year = {2019},
	publisher = {Springer},
	doi = {10.1007/s12532-018-0139-4}
}

@article{howard1998gronwall,
	title={The gronwall inequality},
	author={Howard, Ralph},
	journal={lecture notes},
	year={1998}
}

@book{perko2013differential,
	title={Differential equations and dynamical systems},
	author={Perko, Lawrence},
	volume={7},
	year={2013},
	publisher={Springer Science and Business Media}
}

@book{cesari2012optimization,
	title={Optimization—theory and applications: problems with ordinary differential equations},
	author={Cesari, Lamberto},
	volume={17},
	year={2012},
	publisher={Springer Science and Business Media}
}

@article{V3EarthSystems,
  title = {Influence of Gestation Delay and the Role of Additional Food in Holling Type {{III}} Predator--Prey Systems: A Qualitative and Quantitative Investigation},
  shorttitle = {Influence of Gestation Delay and the Role of Additional Food in Holling Type {{III}} Predator--Prey Systems},
  author = {Chhetri, Bishal and Kanumoori, Deva Siva Sai Murari and Vamsi, D. K. K.},
  date = {2021-06},
  journaltitle = {Modeling Earth Systems and Environment},
  shortjournal = {Model. Earth Syst. Environ.},
  volume = {7},
  number = {2},
  pages = {897--915},
  issn = {2363-6203, 2363-6211},
  doi = {10.1007/s40808-020-01042-y}
}

@article{V3JTB,
  title = {Additional Food Supplements as a Tool for Biological Conservation of Predator-Prey Systems Involving Type {{III}} Functional Response: {{A}} Qualitative and Quantitative Investigation},
  shorttitle = {Additional Food Supplements as a Tool for Biological Conservation of Predator-Prey Systems Involving Type {{III}} Functional Response},
  author = {Srinivasu, P. D. N. and Vamsi, D. K. K. and Ananth, V. S.},
  date = {2018-10},
  journaltitle = {Journal of Theoretical Biology},
  shortjournal = {Journal of Theoretical Biology},
  volume = {455},
  pages = {303--318},
  issn = {00225193},
  doi = {10.1016/j.jtbi.2018.07.019}
}

@article{V4DEDS,
  title = {Biological {{Conservation}} of {{Living Systems}} by {{Providing Additional Food Supplements}} in the {{Presence}} of {{Inhibitory Effect}}: {{A Theoretical Study Using Predator}}--{{Prey Models}}},
  shorttitle = {Biological {{Conservation}} of {{Living Systems}} by {{Providing Additional Food Supplements}} in the {{Presence}} of {{Inhibitory Effect}}},
  author = {Srinivasu, P. D. N. and Vamsi, D. K. K. and Aditya, I.},
  date = {2018-01},
  journaltitle = {Differential Equations and Dynamical Systems},
  shortjournal = {Differ Equ Dyn Syst},
  volume = {26},
  number = {1-3},
  pages = {213--246},
  issn = {0971-3514, 0974-6870},
  doi = {10.1007/s12591-016-0344-4}
}

@article{V4Acta,
  title = {Additional {{Food Supplements}} as a {{Tool}} for {{Biological Conservation}} of {{Biosystems}} in the {{Presence}} of {{Inhibitory Effect}} of the {{Prey}}},
  author = {Vamsi, D. K. K. and Kanumoori, Deva Siva Sai Murari and Chhetri, Bishal},
  date = {2020-09},
  journaltitle = {Acta Biotheoretica},
  shortjournal = {Acta Biotheor},
  volume = {68},
  number = {3},
  pages = {321--355},
  issn = {0001-5342, 1572-8358},
  doi = {10.1007/s10441-019-09371-x}
}

\end{document}